\theoremstyle{definition}
\newtheorem{theorem}{Theorem}
\newtheorem{corollary}[theorem]{Corollary}
\newtheorem{example}[theorem]{Example}
\newtheorem{lemma}[theorem]{Lemma}
\newtheorem{remark}[theorem]{Remark}
\begin{document}

\title[A Switching Levy Model]{Characteristic function and Esscher transform of a  switching  Levy model for the temperature dynamic }%
\author{Rofeide Jabbari and Pablo Olivares}%
\address{}%
\email{polivare@fiu.edu}%

\thanks{}%
\subjclass{}%
\keywords{Switching models, temperatures processes, Esscher transform, mean-reverting}%

\begin{abstract}
In this paper we  extend the models in \cite{rstemp1,swttemp} for the dynamic of the temperatures by considering random switching between Levy noises instead of Brownian motions, with a mean-reverting movement towards a seasonal periodic function. The use of Levy noises allows for jumps, capturing, together with the regime changes, sudden and relatively persistent oscillations in the weather. An approximated close-form expression for the characteristic function of the temperature process under an Esscher transform is given.
\end{abstract}
\maketitle
\section{Introduction}
Switching models for temperatures have been recently proposed in \cite{rstemp1, swttemp}. In such models the temperature  evolves between mean-reverting stochastic differential equations whose background noises switch at random times between Brownian motions with different volatilities. We extend these models by considering random switching between Levy noises with  mean-reverting movement towards a seasonal periodic function. The use of Levy noises and regime-switching models allow  random jumps in the underlying temperature process, capturing sudden and relatively persistent oscillations in temperatures and   weather in general. \\
 With a view on the pricing weather of derivatives we find the characteristic function of integrals with respect to Levy switching processes by conditioning on the number of switches over a given time interval and investigate the choice of an equivalent martingale measure(EMM) to create a risk-neutral setting with the  use of an Esscher transform. See \cite{EsscherTransform} for the use of Esscher transform. We implement the findings in  time-changed Levy models driven by inverse Gaussian and Gamma subordinators.\\
 In the context of discrete-time regime switching models   have been successfully considered since the pioneer work of \cite{switchfirst}. Continuous-time switching Levy models have been recently introduced in  \cite{conradferoli} in connection with the modeling of oil prices and in \cite{alexatousa} to price spread derivative contracts.\\
 The modeling of the temperature dynamic using Levy noises without switching has been considered in a series of papers. See for example \cite{switchfirst,alfonsi2023stochasticvolatilitymodelvaluation,benth,olivillalevytemp}.\\
The organization of the paper is the following:\\
In section 2 we present the switching Levy model for temperature and compute the characteristic function of the temperatures under the historic measure. In section 3 we find the characteristic function of the temperatures under the measure generate by an Esscher transform, as long as the selection of the parameter in the Esscher transform to obtain an EMM risk-neutral measure. In section 4 we discuss the numerical implementation in the cases of time-changed Levy models with inverse Gaussian and Gamma subordinators while in section 5 we conclude.
\section{A switching Levy model for temperature}
 Let  $(\Omega ,\mathcal{A}, (\mathcal{F}_{t})_{t \geq 0}, P)$ be a filtered probability space verifying the usual conditions. For a stochastic process $(X_t)_{t \geq 0}$ defined on the space filtered space above the functions $\varphi_{X_{t}}$  and $l_{X_t}(u)= \frac{1}{t} \log \varphi_{X_{t}}(-iu)$ represent its characteristic function and the log-cumulant generating function respectively. Notice that when the process has stationary and independent increments the later does not depend on $t$.\\
  The  class $\sigma_{a,b}(\tau),\; a<b \in \mathbb{N}$ is the $\sigma$-algebra generated by the random variables $\{ \tau_a, \tau_{a+1}, \ldots, \tau_b \}$. In particular, we write $\sigma_{b}(\tau):=\sigma_{0,b}(\tau)$.\\
 The discounted process $(\tilde{X}_t)_{t \geq 0}$ is defined as $\tilde{X}_t=e^{-rt} X_t$, where $r > 0$ is a constant interest rate.\\
 Furthermore,  $M(a,b,z)$ denotes the Kummer's confluent hypergeometric function:
\begin{equation*}
  M(a,b,z)=\frac{\Gamma(b)}{\Gamma(a)\Gamma(b-a)}\int_0^1 e^{z u}u^{a-1}(1-u)^{b-a-1}\;du
\end{equation*}
The confluent hypergeometric function can be expanded as:
\begin{equation*}
  M(a,b,z)=\sum_{l=0}^{+\infty} \frac{a^{(l)}}{b^{(l)}l!}z^l
\end{equation*}
See \cite{hypergeom} for definition and properties.\\
Also, $\gamma(a,z)$ is the incomplete Gamma function:
\begin{equation*}
  \gamma(a,z)= \int_0^z x^{a-1} e^{-x}\;dx,\;a \geq 0
\end{equation*}
Let $(T_t)_{t \geq 0}$ be a stochastic process where $T_t$ is the temperature in certain location at time $t$. Temperatures  switch between two regimes. These changes are driven by a stationary Markov process $\{r_t \}_{t \geq 0}$ with values in the space $E_r=\{1,2 \}$.\\
Moreover,  under regime $j$ the temperature process $(T^j_t)_{t \geq 0}$ verifies the mean-reverting stochastic differential equation:
\begin{equation}\label{eq:tempdyna}
    dT^j_t= ds^j_t+\alpha_j(s^j_t-T^j_t)dt+  \sigma_j dV^j_t
\end{equation}
where  the temperature reverts to the deterministic seasonal process  $(s_t)_{t \geq 0}$ given by:
\begin{eqnarray} \label{eq:seass2}
s^j_t&=&  \beta^j_0 + \beta^j_1 t+ \beta^j_2 \sin \left(\frac{2 \pi}{365} t \right)+\beta^j_3 \cos \left(\frac{2 \pi}{365}t \right)
\end{eqnarray}
The parameter $\alpha_j >0$ is the mean-reversion rates under regime $j$, while $\sigma_j$ is the  volatility or standard deviation parameter also under  regimes $j$.  For simplicity we consider the case of constant coefficients in the seasonal component i.e $\beta^j_k=\beta_k, \;k=0,1,2,3$, the mean-reverting rate, $\alpha_j=\alpha$ and  the volatility $\sigma_j=\sigma$. Hence, the switching is reflected only in the Levy noise.\\
Notice that  switching between Levy processes may lead to a change in the volatilities in the volatility through a change in  their parameters.\\
The general case adds some technicalities in the solution of equation (\ref{eq:tempdyna}) but does not change essentially the approach.\\
 To this end consider two switching subordinator  processes $(R^j_t)_{t \geq 0}, j=1,2$, where $R^j$ is  corresponding to regime $j$, and  time-changed processes $(V^l_t)_{t \geq 0}$  defined as:
\begin{eqnarray}\label{eq:tchlevy2}
  V^j_t&=& B_{R^j_t}+ \mu^j_1 R^j_t,\;j=1,2
\end{eqnarray}
Here  $\mu^j_1   \in \mathbb{R},j=1,2.$  are   parameters in the model and $(B_t)_{t \geq 0}$ is a standard Brownian motion. It is assumed that both subordinators  have finite moments of some convenient order depending of the specific process chosen.\\
On the other hand, the dynamic of the regime-switching Markov process is given by the following transition probabilities:
 \begin{eqnarray*}
  P \{r_{t+h}=j/r_t=k \} &=&  \lambda_{jk}h+o(h),\; j \neq k\\
   P \{r_{t+h}=j/r_t=j \} &=&  -\lambda_{jj}h+o(h)
\end{eqnarray*}
Without loss of generality we assume the initial conditions $r_0=1$, i.e. the process starts  under regime $1$. The analysis of a process starting under regime 2 or at a random initial value is similar.\\
Let $N_t$  be  the number of transitions between regimes in the interval $[0,t)$ and  $(\tau_n)_{n \in \mathbb{N}}$ be an increasing  sequence of stopping times representing the instants of regime changes. Notice that $\tau_0=0$ and $\tau_{N_t} \leq t$.\\
 Set also $v_j=\tau_j-\tau_{j-1},\; j \in \mathbb{N}$ as the times between two consecutive changes  of regime.
 \begin{remark}
  Because of the Markovian nature of the switching process the random variables $v_j$ are independent such that  $v_j \sim exp(\lambda_{12})$ if the transition occurs from regime 1 to regime 2 and $v_j \sim exp(\lambda_{21})$ otherwise, where $\lambda_{12}>0, \lambda_{21}>0$.
  \end{remark}
The solution of equation (\ref{eq:tempdyna}) is given in the next elementary result. It follows  from Ito lemma.
\begin{lemma}\label{le:ito}
The solution of equation (\ref{eq:tempdyna}) is:
\begin{eqnarray} \label{eq:timechansol}
  T_t&=&C_1(t, \alpha)+  \sigma e^{-\alpha t} W_t
  \end{eqnarray}
with
\begin{eqnarray}\label{eq:swithdec}
  W_t &=& \sum_{j=1}^{N_t} \int_{\tau_{j-1}}^{\tau_{j}}  e^{\alpha u}dV^{\nu(j)} _u+ \int_{\tau_{N_t}}^{t}  e^{\alpha u}dV^{\nu(N_t+1)} _u
\end{eqnarray}
where $\nu(j)=\frac{3+(-1)^j}{2}$ and $C_1(t, \alpha) = s_t+e^{-\alpha t}(T_0-s_0)$
\end{lemma}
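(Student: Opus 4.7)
The plan is to reduce the problem on each constant-regime interval to a classical Ornstein-Uhlenbeck-type SDE and then patch the pieces together using the continuity of $T_t$ at the switching times $\tau_j$.

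First, on any open interval $(\tau_{j-1}, \tau_j)$ the regime index is constant and equal to $\nu(j)$, so the dynamics of $T_t$ are governed by a single copy of equation (\ref{eq:tempdyna}) driven by $V^{\nu(j)}$. I would introduce the integrating factor $Y_t = e^{\alpha t}(T_t - s_t)$ and apply Ito's product rule. Using the SDE, the drift terms cancel:
\begin{equation*}
dY_t \;=\; \alpha e^{\alpha t}(T_t - s_t)\,dt + e^{\alpha t}\bigl(dT_t - ds_t\bigr)
     \;=\; \alpha e^{\alpha t}(T_t - s_t)\,dt + e^{\alpha t}\bigl[\alpha(s_t - T_t)\,dt + \sigma\, dV^{\nu(j)}_t\bigr]
     \;=\; \sigma e^{\alpha t}\, dV^{\nu(j)}_t.
\end{equation*}
Integrating from $\tau_{j-1}$ to $\tau_j$ gives the elementary identity $Y_{\tau_j}-Y_{\tau_{j-1}} = \sigma \int_{\tau_{j-1}}^{\tau_j} e^{\alpha u}\, dV^{\nu(j)}_u$.

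Next, I would sum these telescoping increments from $j=1$ to $j=N_t$, and append the final piece from $\tau_{N_t}$ to $t$, where the active regime is $\nu(N_t+1)$. Since $T_t$ is continuous across the switching times (only the noise driving it changes), $Y_t$ is also continuous there, so the telescoping is legitimate and yields
\begin{equation*}
Y_t \;=\; Y_0 + \sigma \sum_{j=1}^{N_t} \int_{\tau_{j-1}}^{\tau_j} e^{\alpha u}\, dV^{\nu(j)}_u + \sigma \int_{\tau_{N_t}}^{t} e^{\alpha u}\, dV^{\nu(N_t+1)}_u \;=\; (T_0 - s_0) + \sigma W_t.
\end{equation*}
Multiplying by $e^{-\alpha t}$ and rearranging gives $T_t = s_t + e^{-\alpha t}(T_0 - s_0) + \sigma e^{-\alpha t} W_t$, which is exactly the claimed formula with $C_1(t,\alpha) = s_t + e^{-\alpha t}(T_0 - s_0)$.

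The only real subtlety, rather than an obstacle, is verifying that the parity map $\nu(j)=(3+(-1)^j)/2$ correctly encodes the regime sequence starting from $r_0=1$: one checks $\nu(1)=1$ and $\nu(2)=2$, and alternation thereafter, which matches the assumption $r_0=1$. Everything else is the standard Ito-integrating-factor calculation, applied regime-by-regime and glued together by continuity of $T_t$.
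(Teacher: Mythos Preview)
Your proof is correct and is precisely the standard integrating-factor argument behind the paper's one-line justification ``It follows from Ito lemma.'' You have simply supplied the details the paper omits: the substitution $Y_t=e^{\alpha t}(T_t-s_t)$, the cancellation of drift on each constant-regime interval, and the telescoping across switching times.
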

A  result about the probability distribution of the instants and the number of regime changes in $[0,t)$ is given in the next two lemmas. To this end we introduce the following quantities:
\begin{eqnarray*}
 D(m,n,l)&=& \frac{\lambda^{ m}_{12}}{\lambda^{ n}_{21} \Gamma(l)}, \; l \geq 1 \\
 D(m,n,0)&=& 0,\; C(m,n,0)= 1 \\
  C(m,n,l) &=& \frac{m^{(l)}(1-\frac{\lambda_{12}}{\lambda_{21}})^l}{n^{(l)}l!}=\frac{\binom{m+l-1}{l}(1-\frac{\lambda_{12}}{\lambda_{21}})^l}{\binom{n+l-1}{l}l!},\; l>0
\end{eqnarray*}
where $a^{(l)}$ is defined as the Pochhammer symbol or raising factorial,
\begin{equation*}
  a^{(0)}=1, a^{(l)}=a(a+1) \ldots (a+l-1)
\end{equation*}
 We have, for $k \geq 1$:
\begin{lemma}\label{lemma:tausdistrib}
Let $\tau_l$ be the time of the l-th regime change, $f_{\tau_l}$ and $F_{\tau_l}$ its respective  probability density function (p.d.f.) and cumulative distribution function(c.d.f.). Then:
\begin{eqnarray*}
 F_{\tau_{2k}}(t)  &=& D(k,k,2k)  \sum_{l=0}^{+\infty}  C(k,2k,l)  \gamma(2k+l, \lambda_{21} t) \\
  F_{\tau_{2k+1}}(t)  &=&  D(k+1,k+1,2k+1) \sum_{l=0}^{+\infty} C(k+1,2k+1,l)  \gamma(2k+l+1, \lambda_{21} t)
\end{eqnarray*}
Moreover, let $N_t$  be  the number of transitions between regimes  in the interval $[0,t)$. Then, for $k \in \mathbb{N}$:
\begin{eqnarray} \label{eq:poisson1}
p_k(t):=P(N_t=k)  &=& F_{\tau_{k}}(t)-F_{\tau_{k+1}}(t)
\end{eqnarray}
\end{lemma}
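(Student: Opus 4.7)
The plan is to express $\tau_l$ as a sum of independent exponentials whose rates alternate, identify the resulting distribution as the convolution of two Gamma laws with different rate parameters, and evaluate the density via Kummer's integral representation.

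Since the chain starts in regime $1$ and transitions deterministically alternate between the two regimes, the holding times are independent with $v_{2i-1}\sim \exp(\lambda_{12})$ and $v_{2i}\sim\exp(\lambda_{21})$. Grouping odd- and even-indexed holding times gives $\tau_{2k}=X+Y$ where $X$ is $\mathrm{Gamma}(k,\lambda_{12})$ and $Y$ is $\mathrm{Gamma}(k,\lambda_{21})$, independent; and $\tau_{2k+1}=X'+Y$ with $X'\sim \mathrm{Gamma}(k+1,\lambda_{12})$ independent of the same $Y$.

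For generic $X\sim\mathrm{Gamma}(n,\lambda_{12})$, $Y\sim\mathrm{Gamma}(m,\lambda_{21})$, I write the density of $X+Y$ as a convolution integral, pull out $e^{-\lambda_{21}t}$, and substitute $u=x/t$. The remaining integral
\[
\int_0^1 u^{n-1}(1-u)^{m-1} e^{(\lambda_{21}-\lambda_{12})tu}\,du
\]
is precisely the Kummer integral representation from the excerpt with $(a,b,z)=(n,n+m,(\lambda_{21}-\lambda_{12})t)$. This yields
\[
f_{X+Y}(t)=\frac{\lambda_{12}^{n}\lambda_{21}^{m}}{\Gamma(n+m)}\,t^{n+m-1}e^{-\lambda_{21}t}\,M\bigl(n,n+m,(\lambda_{21}-\lambda_{12})t\bigr).
\]

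Next I expand $M$ via its power series and integrate term by term using $\int_0^t s^{p-1}e^{-\lambda_{21}s}\,ds=\lambda_{21}^{-p}\gamma(p,\lambda_{21}t)$. The factor $(\lambda_{21}-\lambda_{12})^l/\lambda_{21}^l$ collapses to $(1-\lambda_{12}/\lambda_{21})^l$, while the surviving constants combine into $\lambda_{12}^{n}/(\lambda_{21}^{n}\Gamma(n+m))$. Specialising $(n,m)=(k,k)$ and $(n,m)=(k+1,k)$ and matching against the definitions of $D(m,n,l)$ and $C(m,n,l)$ reproduces the announced formulas for $F_{\tau_{2k}}$ and $F_{\tau_{2k+1}}$.

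Finally, the identity $p_k(t)=F_{\tau_k}(t)-F_{\tau_{k+1}}(t)$ follows from the elementary equivalence $\{N_t\geq k\}=\{\tau_k\leq t\}$ between the counting process and its jump times, hence $P(N_t=k)=P(N_t\geq k)-P(N_t\geq k+1)$. The main obstacle is bookkeeping: tracking the powers of $\lambda_{12}$, $\lambda_{21}$ and the Pochhammer symbols through the term-by-term integration so that the result aligns exactly with the forms of $D$ and $C$. The substitution recognising Kummer's $M$ and the series expansion are the only conceptual steps; everything else is routine algebra.
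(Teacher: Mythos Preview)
Your proposal is correct and follows essentially the same route as the paper: decompose $\tau_{2k}$ and $\tau_{2k+1}$ into independent Erlang (Gamma) sums, obtain the density involving Kummer's $M$, expand $M$ as a power series and integrate term by term to produce the incomplete gamma functions, then read off $p_k(t)$ from $\{N_t\geq k\}=\{\tau_k\leq t\}$. The only difference is that the paper quotes the convolution density from an external source, whereas you derive it directly via the Kummer integral representation already stated in the preliminaries; this makes your argument slightly more self-contained but is otherwise identical.
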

\begin{proof}
First, define the independent random variables:
\begin{eqnarray*}
 X_1 &=& \sum_{l=1}^k v_{2l-1},\;  X_2 = \sum_{l=1}^k v_{2l}
\end{eqnarray*}
Notice that $\tau_{2k}=X_1+X_2$, where $X_1 \sim Erlang(k,\lambda_{12})$ and $X_2 \sim Erlang(k,\lambda_{21})$. \\
The probability distribution of the sum of two independent Erlang random variables with different shape and rate parameters, or equivalently the sum of independent exponential random variables with different rates,  has been found in \cite{paganis} or in \cite{levyedmond} for example. Adapting their results, specifically  Corollary 6.3 in \cite{levyedmond}, to our case we have:
\begin{eqnarray*}
  f_{\tau_{2k}}(x)  &=&D(k,-k,2k)  M(k,2k,(\lambda_{21}-\lambda_{12})x)x^{2k-1}e^{-\lambda_{21} x},\;x \geq 0,\; \lambda_{21}> \lambda_{12}
\end{eqnarray*}
Therefore for $t \geq 0$:
\begin{eqnarray*}
 F_{\tau_{2k}}(t)  &=& D(k,-k,2k)   \int_0^t x^{2k-1}e^{-\lambda_{21} x}M(k,2k,(\lambda_{21}-\lambda_{12})x)\;dx\\
 &=& D(k,-k,2k)  \sum_{l=0}^{+\infty} C(k,2k,l)    \int_0^t x^{2k+l-1}e^{-\lambda_{21} x}\;dx \\
  &=& D(k,k,2k)  \sum_{l=0}^{+\infty}  C(k,2k,l)  \gamma(2k+l, \lambda_{21} t)
\end{eqnarray*}
Similarly $\tau_{2k+1}=Y_1+Y_2$, where $Y_1 \sim Erlang(k+1,\lambda_{12})$ and $Y_2 \sim Erlang(k,\lambda_{21})$ and independent random variables. Then:
\begin{eqnarray*}
f_{\tau_{2k+1}}(x) &=& D(k+1,-k,2k+1)  M(k+1,2k+1,(\lambda_{21}-\lambda_{12})x)  x^{2k}e^{-\lambda_{21} x}
\end{eqnarray*}
which leads to:
\begin{eqnarray*}
 F_{\tau_{2k+1}}(t)  &=& D(k+1,-(k+1),2k+1) \int_0^t x^{2k}e^{-\lambda_{21} x}M(k+1,2k+1,(\lambda_{21}-\lambda_{12})x)\;dx\\
  &=& D(k+1,k,2k+1) \sum_{l=0}^{+\infty} C(k+1,2k+1,l)  \gamma(2k+l+1,\lambda_{21} t)
 \end{eqnarray*}
 Regarding the second part of the lemma we have:
\begin{eqnarray*}
p_k(t)  &=&  P( \tau_{k} <t, \tau_{k+1}>t)= P( \tau_{k} <t)-P(\tau_{k+1}<t)=F_{\tau_{k}}(t)-F_{\tau_{k+1}}(t)
\end{eqnarray*}
\end{proof}
Next, we compute the characteristic function of some integrals of the background noise process given by the switching Levy process $(V_t)_{t \geq 0}$. To this end we will use a well-known result  about functional  of a Levy process $(\xi_t)_{t \geq 0}$ with $\xi_0=0$ and a measurable function $f$, see for example \cite{eberleinraible}. Namely:
\begin{equation}\label{eq:funclevy}
  E [ exp( i \int_0^t f(s)\;d X_s) ]=exp(\int_0^t l_{X}(-i f(s))\;ds)
\end{equation}
where  $(X_t)_{t \geq 0}$ is a Levy process with $X_0=0$ and $f$ is a measurable function.\\
The main result in the section is given below.
\begin{theorem}\label{teo:chfintpro}
 Let $(\xi_t)_{t \geq 0}$ be a two-regime switching Levy process starting at regime one, with  log-cumulant generating function $l_{\xi^l}$ when the process is at regime $l=1,2$. Then,
 \begin{eqnarray} \nonumber
  E [ exp( i \int_0^t f(s)\;d \xi_s) ]&=&  \left( \sum_{k=0}^{+\infty}  M_1(k) [ p_{2k}(t)+ \lambda_{12} J_1  p_{2k+1}(t) ] \right)\\ \nonumber
   && \left( \sum_{k=0}^{+\infty}   J_3(t,k)   p_{2k}(t) +  \sum_{k=0}^{+\infty}   J_4(t,k) p_{2k+1}(t) \right) \\ \label{eq:funclevy3}
  &&
\end{eqnarray}
where:
\begin{eqnarray*}
 J_1&=& \int_0^{+\infty} e^{I_{1}(x)}e^{-\lambda_{12}x}\; dx, \;\;J_2= \int_0^{+\infty} e^{I_{2}(x)}e^{-\lambda_{21}x}\; dx \\
 J_3(t,k)&=& D(k, -k, 2k) \sum_{l=0}^{+\infty}C(k,2k,l) G_1(t,2k+l)\lambda^l_{21} \\
 J_4(t,k) &=& D(k+1, -k, 2k+1) \sum_{l=0}^{+\infty}C(k+1,2k+1,l) G_2(t,2k+l+1) \lambda^l_{21} \\
G_j( t,m) &=& \int_0^{t} e^{I_j(t-z)}  z^{m-1}e^{-\lambda_{21}  z} \;dz,\; j=1,2 \\
 M_1(k) &=& \lambda^k_{12} \lambda^k_{21} J^k_1 J^k_2
\end{eqnarray*}
and
\begin{equation*}
  I_j(x)=\int_0^x l_{\xi^{j}}(-if(s))\;  ds, \;\;j=1,2
\end{equation*}
Expressions for $ p_{2k}(t)$ and  $ p_{2k+1}(t)$ are given in the  lemma \ref{lemma:tausdistrib}.
\end{theorem}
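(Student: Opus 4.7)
The plan is to condition on the number $N_t$ of regime switches and on the switch times $\tau_1,\ldots,\tau_{N_t}$, decompose the stochastic integral segment by segment, and apply formula (\ref{eq:funclevy}) on each piece. Using the partition $\Omega=\bigsqcup_{n\geq 0}\{N_t=n\}$,
\begin{equation*}
E\!\left[\exp\!\left(i\!\int_0^t\! f(s)\,d\xi_s\right)\right] \;=\; \sum_{n=0}^{\infty}E\!\left[\exp\!\left(i\!\int_0^t\! f(s)\,d\xi_s\right)\mathbbm{1}_{\{N_t=n\}}\right].
\end{equation*}
On $\{N_t=n\}$ the integral splits into $n+1$ pieces over $[\tau_{j-1},\tau_j]$ in regime $\nu(j)$ for $j=1,\ldots,n$, together with the final piece $[\tau_n,t]$ in regime $\nu(n+1)$. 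Since the switching mechanism is Markovian, the increments of $\xi$ on disjoint segments are conditionally independent given the switch times, so (\ref{eq:funclevy}) applies piecewise and produces the conditional characteristic function $\exp\bigl(\sum_{j=1}^{n}[I_{\nu(j)}(\tau_j)-I_{\nu(j)}(\tau_{j-1})] + I_{\nu(n+1)}(t)-I_{\nu(n+1)}(\tau_n)\bigr)$.

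Next, I would split the outer sum by the parity of $N_t$. For $N_t=2k$, the final segment lies in regime~1 and the inter-arrival times $v_{2j-1}$, $v_{2j}$ are independent exponentials with rates $\lambda_{12}$ and $\lambda_{21}$ respectively; for $N_t=2k+1$ the terminal regime is~2 and there is one additional completed regime-1 segment. Each completed regime-1/regime-2 pair, once its exponential weight is absorbed into the segment's cumulant integral, contributes a factor $\lambda_{12}J_1\cdot\lambda_{21}J_2$, so that $k$ completed cycles give $M_1(k)=(\lambda_{12}\lambda_{21}J_1J_2)^k$, and the extra completed regime-1 segment present when $N_t$ is odd accounts for the extra $\lambda_{12}J_1$ that multiplies $p_{2k+1}(t)$ in the first bracketed sum. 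The final incomplete segment, after the substitution $z=t-\tau_{N_t}$ and using the Erlang-sum density of $\tau_{N_t}$ from the proof of Lemma~\ref{lemma:tausdistrib}, produces the integrals $G_j(t,m)$; expanding the Kummer function $M(\cdot,\cdot,(\lambda_{21}-\lambda_{12})z)$ in that density exactly as in Lemma~\ref{lemma:tausdistrib} then packages the resulting series into $J_3(t,k)$ and $J_4(t,k)$. Identifying the probabilities $p_{2k}(t)$ and $p_{2k+1}(t)$ and reassembling yields (\ref{eq:funclevy3}).

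The main obstacle is the bookkeeping of the nested integrals over the $v_j$. Each increment $I_{\nu(j)}(\tau_j)-I_{\nu(j)}(\tau_{j-1})$ depends on the absolute switch times rather than only on $v_j$, so the separation of variables required to obtain decoupled factors $J_1, J_2$ and the clean product form in (\ref{eq:funclevy3}) is delicate and will hinge on translating each segment's contribution into a form compatible with the exponential marginal of $v_j$. Aligning the Kummer-series expansion of the Erlang-sum densities with this decoupling, so that the completed-cycle contributions factor out from the final-segment contributions to produce the stated product of two series, is the key combinatorial step of the argument.
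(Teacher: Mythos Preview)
Your overall strategy---condition on $N_t$ and the switch times, split by parity, apply (\ref{eq:funclevy}) on each segment, then integrate out the exponential inter-arrival times and the density of $\tau_{N_t}$ (expanded through the Kummer series of Lemma~\ref{lemma:tausdistrib}) to produce $J_1,J_2$ and $J_3,J_4$---is exactly the paper's approach.

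The gap in your proposal is precisely the point you flag as the ``main obstacle'', and here the paper proceeds differently from what you write. You compute the segment contribution as $I_{\nu(j)}(\tau_j)-I_{\nu(j)}(\tau_{j-1})$, which depends on the absolute times and blocks the decoupling of the $v_j$'s. The paper never arrives at these telescoping differences: invoking stationarity of the L\'evy increments, it replaces $\int_{\tau_{j-1}}^{\tau_j} f(s)\,d\xi^{\nu(j)}_s$ by $\int_{0}^{v_j} f(s)\,d\xi^{\nu(j)}_s$ and the final piece by $\int_{0}^{t-\tau_{N_t}} f(s)\,d\xi_s$, obtaining $I_{\nu(j)}(v_j)$ and $I_{\nu(N_t+1)}(t-\tau_{N_t})$ directly. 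With that, the completed segments factor as $(\lambda_{12}J_1)^k(\lambda_{21}J_2)^k$ (times an extra $\lambda_{12}J_1$ in the odd case), the final segment gives $E[e^{I_j(t-\tau_{N_t})}]=J_3(t,k)$ or $J_4(t,k)$ after the change of variable $z=t-x$ and the Kummer expansion, and the paper then splits the full expectation into the product of the two bracketed sums by treating the ``completed segments'' part and the ``final segment'' part as independent. You should note, however, that for non-constant $f$ stationarity of the integrator only gives $\int_{\tau_{j-1}}^{\tau_j} f(s)\,d\xi_s\stackrel{d}{=}\int_{0}^{v_j} f(\tau_{j-1}+s)\,d\xi_s$, not $\int_{0}^{v_j} f(s)\,d\xi_s$; so your caution about the absolute-time dependence is well founded, and the decoupling into $J_1,J_2$ and the final product factorisation in the paper rest on this particular replacement rather than on any further combinatorial argument you were missing.
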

\begin{proof}
 Notice that between times of regime changes  $(\xi_t)_{t \geq 0}$ is a Levy process. Hence, equation (\ref{eq:funclevy}) applies after conditioning on the random variables $\tau_{j-1}$ and $\tau_j$. \\
Moreover, conditioning on $N_t=2k$ and $\sigma_{2k}(\tau)$ and from the independence and the stationarity of the increments of a Levy process we have:
\begin{eqnarray*}
&&  E[\exp( i \sum_{j=1}^{N_t} \int_{\tau_{j-1}}^{\tau_{j}} f(s)\;  d\xi^{\nu(j)}_s /  N_t=2k, \sigma_{2k}(\tau))]= E[\exp( i \sum_{j=1}^{2k} \int_{\tau_{j-1}}^{\tau_{j}} f(s)\;  d\xi^{\nu(j)}_s / \sigma_{2k}(\tau))]\\
&=&  \prod_{j=1}^{2k} E[\exp( i \int_{\tau_{j-1}}^{\tau_{j}} f(s)\;  d\xi^{\nu(j)}_s / \sigma_{2k}(\tau))]= \prod_{j=1}^{2k} E[ \exp( i \int_{0}^{v_{j}} f(s)\;  d\xi^{\nu(j)}_s)] \\
&=& \exp[ \sum_{j=1}^{2k} \int_{0}^{v_{j}} l_{\xi^{\nu(j)}}(-if(s))\;  ds]= \exp[ \sum_{j=1}^{2k} I_{\nu(j)}(v_{j})]
\end{eqnarray*}
and
\begin{eqnarray*}
&& E[\exp( i \int_{\tau_{N_t}}^{t}  f(s) d\xi^{1} _s / N_t=2k, \sigma_{2k}(\tau))]=E[\exp( i \int_{\tau_{2k}}^{t} f(s)\;  d \xi^{1}_s /  \sigma_{2k}(\tau) )]\\
& = & E[ \exp( i \int_{0}^{t-\tau_{2k}} f(s)\;  d\xi^{1}_s) /  \sigma_{2k}(\tau)]\\
&=&  \exp[  \int_{0}^{t-\tau_{2k}} l_{\xi^{1}}(-if(s))\;  ds ]=\exp[ I_{1}(t-\tau_{2k})]
\end{eqnarray*}
 By similar analysis, conditioning   on $[N_t=2k+1]$, we have:
 \begin{eqnarray*}
&&  E[\exp( i \sum_{j=1}^{N_t} \int_{\tau_{j-1}}^{\tau_{j}} f(s)\;  d\xi^{\nu(j)}_s) /  N_t=2k+1, \sigma_{2k+1}(\tau)]= E[\exp( i \sum_{j=1}^{2k+1} \int_{\tau_{j-1}}^{\tau_{j}} f(s)\;  d\xi^{\nu(j)}_s / \sigma_{2k+1}(\tau))]\\
&=&  \prod_{j=1}^{2k+1} E[\exp( i \int_{\tau_{j-1}}^{\tau_{j}} f(s)\;  d\xi^{\nu(j)}_s / \sigma_{2k+1}(\tau))]=  \prod_{j=1}^{2k+1} E[ \exp( i \int_{0}^{v_{j}} f(s)\;  d\xi^{\nu(j)}_s )] \\
&=& \exp[ \sum_{j=1}^{2k+1} \int_{0}^{v_{j}} l_{\xi^{\nu(j)}}(-if(s))\;  ds]= \exp[ \sum_{j=1}^{2k+1} I_{\nu(j)}(v_{j})]
\end{eqnarray*}
and
\begin{eqnarray*}
&& E[\exp( i \int_{\tau_{N_t}}^{t}  f(s) d\xi^{1} _s) / N_t=2k+1, \sigma_{2k+1}(\tau)]=E[\exp( i \int_{\tau_{2k+1}}^{t} f(s)\;  d \xi^{2}_s )/  \sigma_{2k+1}(\tau) ]\\
& = & E[ \exp( i \int_{0}^{t-\tau_{2k+1}} f(s)\;  d\xi^{2}_s)]=\exp[  \int_{0}^{t-\tau_{2k+1}} l_{\xi^{2}}(-if(s))\;  ds ] \\
&=& \exp[ I_{2}(t-\tau_{2k+1})]
\end{eqnarray*}
If there are $2k$ changes of regime on the interval $[0,t)$ there will be $k$ subintervals where the process is at regime one and another $k$ where the process is at regime two. The remaining time on $[0,t)$, i.e. during the interval $[t_{N_t},t)$,  the process is at regime one.\\
 In a  similar analysis, if there are $2k+1 $ changes of regime on the interval $[0,t)$ there will be $k+1$ subintervals where the process is at regime one and another $k$ where the process is at regime two.   The process remains in regime two during $[\tau_{N_t},t)$.\\
On the other hand:
\begin{equation*}
   F_{t-\tau_{N_t}}(x)= P(\tau_{N_t} \geq t-x)=1- F_{\tau_{N_t}}(t-x), \;x<t
\end{equation*}
Therefore, conditionally on the event $[N_t=2k]$  we have for $ 0 \leq x \leq t$ that:
\begin{eqnarray*}
 f_{t-\tau_{2k}}(x)&=& f_{\tau_{2k}}(t-x)=D(k,-k,2k) (t-x)^{2k-1}e^{-\lambda_{21}  (t-x)}M(k,2k,(\lambda_{21}-\lambda_{12}) (t-x))
\end{eqnarray*}
Similary,  conditionally on $[N_t=2k+1]$:
\begin{eqnarray*}
 f_{t-\tau_{2k+1}}(x)&=& D(k+1,-k,2k+1)  (t-x)^{2k}e^{-\lambda_{21}  (t-x)}M(k+1,2k+1,(\lambda_{21}-\lambda_{12}) (t-x))
\end{eqnarray*}
 and zero otherwise.\\
Furthermore:
\begin{eqnarray*}
 && E[\exp( i \sum_{j=1}^{N_t} \int_{\tau_{j-1}}^{\tau_{j}} f(s)\;  d\xi^{\nu(j)}_s)/ \sigma_{N_t}(\tau)] \\
 &=& \sum_{k=0}^{+\infty} E[\exp( i \sum_{j=1}^{N_t} \int_{\tau_{j-1}}^{\tau_{j}} f(s)\;  d\xi^{\nu(j)}_s) /N_t=2k,\sigma_{2k}(\tau)]p_{2k}(t)\\
 &+& \sum_{k=0}^{+\infty} E[\exp( i \sum_{j=1}^{N_t} \int_{\tau_{j-1}}^{\tau_{j}} f(s)\;  d\xi^{\nu(j)}_s) /N_t=2k+1,\sigma_{2k+1}(\tau)]p_{2k+1}(t)\\
 &=& \sum_{k=0}^{+\infty} \exp[ \sum_{j=1}^{2k} I_{\nu(j)}(v_{j})]p_{2k}(t)+ \sum_{k=0}^{+\infty} \exp[ \sum_{j=1}^{2k+1} I_{\nu(j)}(v_{j})]p_{2k+1}(t)
\end{eqnarray*}
and
\begin{eqnarray*}
 && E[\exp( i \int_{\tau_{N_t}}^{t}  f(s) d\xi^{1} _s)/ \sigma_{N_t}(\tau)] \\
 &=& \sum_{k=0}^{+\infty} E[\exp( i \int_{\tau_{N_t}}^{t}  f(s) d\xi^{1} _s) / N_t=2k, \sigma_{2k}(\tau)]p_{2k}(t)\\
 &+& \sum_{k=0}^{+\infty} E[\exp( i \int_{\tau_{N_t}}^{t}  f(s) d\xi^{2} _s) / N_t=2k+1, \sigma_{2k+1}(\tau)]p_{2k+1}(t)\\
 &=& \sum_{k=0}^{+\infty} \exp[ I_{1}(t-\tau_{2k})]p_{2k}(t)+ \sum_{k=0}^{+\infty} \exp[I_{2}(t-\tau_{2k+1})]p_{2k+1}(t)
\end{eqnarray*}
Moreover:
\begin{eqnarray*}
 && E[\exp( i \sum_{j=1}^{N_t} \int_{\tau_{j-1}}^{\tau_{j}} f(s)\;  d\xi^{\nu(j)}_s)]=E[E[\exp( i \sum_{j=1}^{N_t} \int_{\tau_{j-1}}^{\tau_{j}} f(s)\;  d\xi^{\nu(j)}_s)/ \sigma_{N_t}(\tau)]] \\
 &=& \sum_{k=0}^{+\infty} \prod_{j=1}^{2k}E[\exp(  I_{\nu(j)}(v_{j}))]p_{2k}(t)+ \sum_{k=0}^{+\infty} \prod_{j=1}^{2k+1} E[\exp(  I_{\nu(j)}(v_{j}))]p_{2k+1}(t)\\
 &=& \sum_{k=0}^{+\infty} (E[\exp(  I_{1}(v))])^k (E[\exp(  I_{2}(v^*))])^kp_{2k}(t)\\
 &+& \sum_{k=0}^{+\infty} (E[\exp(  I_{1}(v))])^{k+1} (E[\exp(  I_{2}(v^*))])^kp_{2k+1}(t)
 \end{eqnarray*}
where $v \sim exp(\lambda_{12})$ is a replica of the time between changes from regime one to regime two and $v* \sim \exp(\lambda_{21})$ is a replica of the time between changes from regime two to regime one.\\
Also,
\begin{eqnarray*}
 && E[\exp( i \int_{\tau_{N_t}}^{t}  f(s) d\xi^{\nu(N_t+1)} _s)]=E[E[\exp( i \int_{\tau_{N_t}}^{t}  f(s) d\xi^{\nu(N_t+1)} _s)/ \sigma_{N_t}(\tau)]] \\
&=& \sum_{k=0}^{+\infty} E[\exp( I_{1}(t-\tau_{2k}))]p_{2k}(t)+ \sum_{k=0}^{+\infty} E[\exp(I_{2}(t-\tau_{2k+1}))]p_{2k+1}(t)
\end{eqnarray*}
Moreover:
\begin{eqnarray*}
E [ e^{I_{1}(v)}]&=& \lambda_{12} \int_0^{+ \infty} e^{I_1(x)-\lambda_{12} x}\;dx := \lambda_{12} J_1 \\
E [ e^{I_{2}(v*)}]&=& \lambda_{21} \int_0^{+ \infty} e^{I_2(x)-\lambda_{21} x}\;dx := \lambda_{21} J_2
\end{eqnarray*}
On the other hand:
\begin{eqnarray*}
E[e^{I_1(t-\tau_{2k})}] &=&  \int_0^{t} e^{I_1(x)}f_{t-\tau_{2k}}(x)\;dx  \\
&=& D(k, -k, 2k)  \int_0^{t} e^{I_1(x)}  (t-x)^{2k-1}e^{-\lambda_{21}  (t-x)}M(k,2k,(\lambda_{21}-\lambda_{12}) (t-x)) \;dx
\end{eqnarray*}
The previous equation, after the change of variable $z=t-x, dz=-dx$ reads:
\begin{eqnarray*}
&=&  D(k, -k, 2k)  \int_0^{t} e^{I_1(t-z)}  z^{2k-1}e^{-\lambda_{21} z}M(k,2k,(\lambda_{21}-\lambda_{12}) z) \;dz \\
&=&  D(k, -k, 2k) \sum_{l=0}^{+\infty} \frac{k^{(l)}(\lambda_{21}-\lambda_{12})^l}{(2k)^{(l)}l!}\int_0^{t} e^{I_1(t-z)}  z^{2k+l-1}e^{-\lambda_{21}  z} \;dz \\
&=&  D(k, -k, 2k) \sum_{l=0}^{+\infty}C(k,2k,l) G_1(t,2k+l)\lambda^l_{21} := J_3(t,k)
\end{eqnarray*}
Similarly:
\begin{eqnarray*}
E[e^{I_2(t-\tau_{2k+1})}] &=&  \int_0^{t} e^{I_2(x)}f_{t-\tau_{2k+1}}(x)\;dx  \\
&=& D(k+1, -k, 2k+1)  \int_0^{t} e^{I_2(x)}  (t-x)^{2k}e^{-\lambda_{21}  (t-x)}M(k+1,2k+1,(\lambda_{21}-\lambda_{12}) (t-x)) \;dx \\
&=&  D(k+1, -k, 2k+1) \sum_{l=0}^{+\infty} \frac{(k+1)^{(l)}(\lambda_{21}-\lambda_{12})^l}{(2k+1)^{(l)}l!}\int_0^{t} e^{I_2(t-z)}  z^{2k+l}e^{-\lambda_{21}  z} \;dz\\
&=&  D(k+1, -k, 2k+1)  \sum_{l=0}^{+\infty} C(k+1,2k+1,l)  G_2(t,2k+l+1)\lambda^l_{21} := J_4(t,k)
\end{eqnarray*}
Finally, from the independence of the process $(\xi_t)_{t \geq 0}$ on non-overlapping intervals:
\begin{eqnarray*}
&& E [\exp( i \int_0^t f(s)\;d \xi_s)] = E[\exp( i \sum_{j=1}^{N_t} \int_{\tau_{j-1}}^{\tau_{j}} f(s)\;  d\xi^{\nu(j)}_s)]E[\exp( i \int_{\tau_{N_t}}^{t}  f(s) d\xi^{\nu(N_t+1)}_s)] \\
&=& \left( \sum_{k=0}^{+\infty} (E[\exp(  I_{1}(v))])^k (E[\exp(  I_{2}(v^*))])^kp_{2k}(t) \right.\\
 &+& \left. \sum_{k=0}^{+\infty} (E[\exp(  I_{1}(v))])^{k+1} (E[\exp(  I_{2}(v^*))])^kp_{2k+1}(t) \right)\\
 && \left( \sum_{k=0}^{+\infty} E[\exp( I_{1}(t-\tau_{2k}))]p_{2k}(t)+ \sum_{k=0}^{+\infty} E[\exp(I_{2}(t-\tau_{2k+1}))]p_{2k+1}(t)\right)
\end{eqnarray*}
from which equation (\ref{eq:funclevy3}) immediately follows.
\end{proof}
A straightforward consequence of the previous theorem establishes  the characteristic function of the temperature process under the historic measure $P$.
\begin{corollary}\label{prop:chftempthetsdet}
For the model described by equations (\ref{eq:tempdyna}), (\ref{eq:seass2}) and (\ref{eq:tchlevy2})   the  characteristic function of $T_t$ under the probability $P$ is:
\begin{eqnarray}\label{eq:chftdet}
 \varphi_{T_t}(u) &=&  \exp[i u C_1(t, \alpha)]\varphi_{W_t}(u \sigma e^{-\alpha t})
\end{eqnarray}
 where the characteristic function of $W_t$, denoted by $\varphi_{W_t}(u)$, is given by equation (\ref{eq:funclevy3}) in Theorem \ref{teo:chfintpro} applied to   $\xi^j_t=V^j_t, f(s)=u \sigma e^{-\alpha(t-s)}$ and
\begin{equation*}
  I_j(u,x):= I_j(x)=\int_0^x l_{R^{j}}(-iu \mu^j_1 \sigma e^{-\alpha (t-s)}-\frac{1}{2}u^2\sigma^2 e^{-2 \alpha (t-s)} )\;  ds, \;\;j=1,2
\end{equation*}
\end{corollary}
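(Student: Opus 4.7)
The proof will be a short bookkeeping argument that patches together Lemma \ref{le:ito}, Theorem \ref{teo:chfintpro}, and a one-line Brownian subordination identity. The plan is to (i) pull out the deterministic part of $T_t$, (ii) recognise $W_t$ as a single stochastic integral against the global switching Levy process and invoke the previous theorem with an explicit choice of $f$, and (iii) express the required $l_{V^j}$ in terms of $l_{R^j}$ so that the statement's formula for $I_j(u,x)$ emerges on substitution.

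First, Lemma \ref{le:ito} gives $T_t=C_1(t,\alpha)+\sigma e^{-\alpha t}W_t$ with $C_1(t,\alpha)$ deterministic and $W_t$ the switching integral in (\ref{eq:swithdec}). Since the shift is non-random, the characteristic function factors as $\varphi_{T_t}(u)=e^{iuC_1(t,\alpha)}\,\varphi_{W_t}(u\sigma e^{-\alpha t})$, which is already the outer form announced in (\ref{eq:chftdet}). Thus the entire content of the corollary reduces to writing $\varphi_{W_t}$ as an instance of (\ref{eq:funclevy3}).

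Second, defining $\xi_s:=V^{\nu(j)}_s$ on each regime-window $[\tau_{j-1},\tau_j)$ and $V^{\nu(N_t+1)}_s$ on $[\tau_{N_t},t)$, the decomposition (\ref{eq:swithdec}) is literally $W_t=\int_0^t e^{\alpha s}\,d\xi_s$, so $\varphi_{W_t}(v)=E\!\left[\exp\!\left(i\int_0^t v\,e^{\alpha s}\,d\xi_s\right)\right]$. Evaluating at $v=u\sigma e^{-\alpha t}$ gives the integrand $f(s)=u\sigma e^{-\alpha(t-s)}$, and Theorem \ref{teo:chfintpro} applies \emph{verbatim} with this $f$ and with $\xi^j=V^j$.

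Third, the only non-formal step is to compute $l_{V^j}(-if(s))$ explicitly. Conditioning on $R^j_t$ in $V^j_t=B_{R^j_t}+\mu^j_1 R^j_t$ yields
\begin{equation*}
E\!\left[e^{zV^j_t}\right]=E\!\left[e^{(z\mu^j_1+z^2/2)R^j_t}\right]=\exp\!\bigl(t\,l_{R^j}(z\mu^j_1+z^2/2)\bigr),
\end{equation*}
so with $z=-if(s)$ one obtains $l_{V^j}(-if(s))=l_{R^j}\bigl(-if(s)\mu^j_1-\tfrac12 f(s)^2\bigr)$. Substituting $f(s)=u\sigma e^{-\alpha(t-s)}$ and integrating over $[0,x]$ reproduces exactly the expression $I_j(u,x)$ stated in the corollary. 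Plugging this into (\ref{eq:funclevy3}) closes the proof.

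The only mild subtlety is justifying the subordination identity at the purely imaginary argument $z=-if(s)$, since it requires $E\!\left[e^{-\tfrac12 f(s)^2 R^j_t}\right]<\infty$ and an analytic-continuation / monotone-convergence argument to transport the real Laplace-exponent identity to the complex line. This is guaranteed by the standing assumption that both subordinators admit finite moments of the relevant order; apart from that, the proof is purely substitution into the theorem just proved.
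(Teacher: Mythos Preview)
Your proposal is correct and follows essentially the same route as the paper: factor out the deterministic shift from Lemma~\ref{le:ito}, derive the subordination identity $l_{V^j}(u)=l_{R^j}(u\mu^j_1+\tfrac12 u^2)$ by conditioning on $R^j_t$, and then read off $\varphi_{W_t}$ as the instance of Theorem~\ref{teo:chfintpro} with $\xi^j=V^j$ and $f(s)=u\sigma e^{-\alpha(t-s)}$. Your write-up is in fact slightly more explicit than the paper's (you spell out why $W_t$ is a single $\int_0^t f\,d\xi$ and flag the analytic-continuation caveat), but the argument is the same.
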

\begin{proof}
First, notice that:
\begin{eqnarray*}\nonumber
  \varphi_{V^j_t}(u)&=& E[E[\exp(i (u V^j_t)/R^j_t)] ]=E[ \exp(i u \mu^j_1 R^j_t) E[\exp( iu B_{R^j_t}/R^j_t)] ]\\
  &=& E[\exp(i u \mu^j_1 R^j_t) \exp( -\frac{1}{2} R^j_t u^2)]= E[\exp(i( u \mu^j_1+\frac{1}{2}i  u^2)R^j_t)]\\
  &=&  \varphi_{R^j_t}(u \mu^j_1+\frac{1}{2}i  u^2)
\end{eqnarray*}
Hence:
\begin{equation}\label{eq:lv}
  l_{V^j}(u)=l_{R^j}(u \mu^j_1+\frac{1}{2}  u^2),\;j=1,2.
\end{equation}
By Lemma 2 and  equation (\ref{eq:funclevy}):
\begin{eqnarray*}\nonumber
  \varphi_{T_t}(u)&=& E[e^{iu T_t}]=\exp [iu C_1(t, \alpha)] E[\exp(iu  \sigma e^{-\alpha t} W_t)]= \exp[i u C_1(t, \alpha)]\varphi_{W_t}(u \sigma e^{-\alpha t})
\end{eqnarray*}
\end{proof}
\section{Switching model under an Esscher transformation}
In order to select the EMM for pricing purposes we consider an Esscher transform of the historic measure $P$. See \cite{EsscherTransform} for a rationale of its use in terms of a utility-maximization wealth criterion. \\
Thus, for a stochastic process $(X_t)_{t \geq 0}$ we consider a change of probability given by the Radon-Nicodym derivative:
  \begin{equation}\label{eq:esscher}
  \frac{d \mathcal{Q}^{\theta}_t}{d P_t}=\\exp[\theta X_t- l_{X_t}(\theta)],\; \theta \in \mathbb{R}
\end{equation}
   where $P_t$ and $\mathcal{Q}^{\theta}_t$ are the respective restrictions of $P$ and $\mathcal{Q}^{\theta}$ to the $\sigma$-algebra $\mathcal{F}_t$. The functions $\varphi^{\theta}_{X_t}(u)$ and $ l^{\theta}_X(u)$  define  respectively  the characteristic function and  the cumulant generating function  of a process $(X_t)_{t \geq 0}$ under the probability $\mathcal{Q}^{\theta}$.\\
      For consistency we denote $\varphi^{0}_{X_{t}}:=\varphi_{X_t}$ and $ l_X^{0}=l_X$.\\
By analogy with the case of financial underlying assets the risk-neutral measure $\mathcal{Q}^{\theta}$ is set to make the discounted temperatures process $(\tilde{T}_t)_{t \geq 0}$ a martingale. The expected value under $\mathcal{Q}^{\theta}$ is denoted $E_{\theta}$.\\
 Under an Esscher transform of parameter $\theta$ the probabilities of the number of regime changes in the interval $[0,t)$ are:
\begin{eqnarray*}\label{eq:probtheta1}
p^{\theta}_k(t) &=& P^{\theta}(N_t=k) = \frac{e^{\theta k } P(N_t=k)}{M_{N_t}(\theta)},\; k \in \mathbb{N}
\end{eqnarray*}
where $M_{N_t}(\theta)=E[e^{\theta N_t}]$ its moment generating function(m.g.f.)  of $N_t$ under the probability \textit{P}.\\
The next result provides an expression for the characteristic function of the temperature process under an Esscher transformation of parameter $\theta$. To this end we let the notations introduced in Theorem \ref{teo:chfintpro} depend on the Esscher parameter $\theta$. Hence:
\begin{eqnarray*}
 I^{\theta}_j(u,x)&=& \int_0^x l_{R^{j}}(g_j(u,s, \theta))\;  ds-l_{R^j}(\mu^j_1 \theta+\frac{1}{2} \theta^2))x \\
G^{\theta}_j(u,t,m) &=& \int_0^{t} e^{I^{\theta}_j(u,t-z)}  z^{m-1}e^{-(\lambda_{21}-\theta)z} \;dz,\; j=1,2 \\
 J^{\theta}_1(u)&=& \int_0^{+\infty} e^{I^{\theta}_{1}(u,x)}e^{-(\lambda_{12}-\theta)x}\; dx,\;  J^{\theta}_2(u)= \int_0^{+\infty} e^{I^{\theta}_{2}(u,x)}e^{-(\lambda_{21}-\theta)x}\; dx \\
 J^{\theta}_3(u,k)&=& D(k, -k, 2k, \theta) \sum_{l=0}^{+\infty}C(k,2k,l) G^{\theta}_1(u,t,2k+l)\lambda^l_{21} \\
 J^{\theta}_4(u,k) &=& D(k+1, -k, 2k+1,\theta)\sum_{l=0}^{+\infty}C(k+1,2k+1,l) G^{\theta}_2(u,t,2k+l+1) \lambda^l_{21}
\end{eqnarray*}
 \begin{theorem}\label{prop:chfuntheta}
Under the probability $\mathcal{Q}^{\theta}$ defined by the Esscher transform  in equation (\ref{eq:esscher}) and the condition $\lambda_{21} > \lambda_{12} > \theta$ the characteristic function of the temperature process is given by:
 \begin{eqnarray}\label{eq:chfuntemptheta}
        \varphi^{\theta}_{T_t}(u)&=& \exp[iu C_1(t, \alpha)]  C_4(u, \theta)C_5(u,\theta)
 \end{eqnarray}
where:
\begin{eqnarray*}
 D(m,n,l, \theta)&=& \frac{(\lambda^{ m}_{12}-\theta)}{(\lambda^{ n}_{21}-\theta) \Gamma(l)}, \; l \geq 1 \\
C_2(u, \theta, k) &=& (\lambda_{12}-\theta)^k (\lambda_{21}-\theta)^k (J^{\theta}_1(u)J^{\theta}_2(u))^k \\
C_3(u, \theta, k) &=& p^{\theta}_{2k}(t)+ (\lambda_{12}-\theta)J^{\theta}_1(u) p^{\theta}_{2k+1}(t)\\
C_4(u, \theta) &=& \sum_{k=0}^{+\infty}  C_2(u, \theta,k) C_3(u, \theta, k)\\
C_5(u,\theta) &=& \sum_{k=0}^{+\infty}   J^{\theta}_3(u,k)  p^{\theta}_{2k}(t) +  \sum_{k=0}^{+\infty}   J^{\theta}_4(u,k)p^{\theta}_{2k+1}(t) \\
\end{eqnarray*}
\begin{equation*}
  g_j(u,s, \theta)=-iu \mu^j_1 \sigma e^{-\alpha (t-s)} +  \mu^j_1 \theta +\frac{1}{2}(-i u\sigma e^{- \alpha (t-s)}+\theta)^2
\end{equation*}
\end{theorem}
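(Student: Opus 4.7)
The plan is to parallel the structure of Corollary \ref{prop:chftempthetsdet} and Theorem \ref{teo:chfintpro}, but carrying the Esscher correction along at every step. First, I would invoke Lemma \ref{le:ito} to write $T_t = C_1(t,\alpha) + \sigma e^{-\alpha t} W_t$ with $W_t$ as in (\ref{eq:swithdec}); this decomposition depends only on the SDE and the path of the switching Lévy driver, so it is valid under $\mathcal{Q}^{\theta}$ as well. Consequently $\varphi^{\theta}_{T_t}(u) = e^{iu C_1(t,\alpha)} \, \varphi^{\theta}_{W_t}(u\sigma e^{-\alpha t})$, reducing the problem to computing the characteristic function of $W_t$ under the Esscher measure.

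Next I would exploit the factorization of the Radon--Nikodym density. Using the independence between the switching Markov chain driving $N_t$ (with waiting times $v_j$) and the Lévy noises $V^j$, the density $\exp[\theta X_t - l_{X_t}(\theta)]$ decouples into two independent tilts: (i) an exponential tilt of the waiting-time distributions from rate $\lambda_{12}$ (resp.\ $\lambda_{21}$) to rate $\lambda_{12}-\theta$ (resp.\ $\lambda_{21}-\theta$) --- this is exactly where the hypothesis $\lambda_{21}>\lambda_{12}>\theta$ is consumed, as it guarantees the shifted rates remain positive and the normalizing integral $J^{\theta}_i(u)$ converges; and (ii) an Esscher tilt of the Lévy cumulants, so that the log-cumulant of $V^j$ becomes $l_{V^j}(\,\cdot + \theta)-l_{V^j}(\theta)$. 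Combining this with the identity $l_{V^j}(u)=l_{R^j}(u\mu^j_1 + \tfrac{1}{2}u^2)$ derived in the proof of Corollary \ref{prop:chftempthetsdet} and evaluating at $-i\sigma e^{-\alpha(t-s)}u$ produces precisely $l_{R^j}(g_j(u,s,\theta)) - l_{R^j}(\mu^j_1\theta + \tfrac{1}{2}\theta^2)$, whose integral in $s$ is the quantity $I^{\theta}_j(u,x)$ appearing in the statement.

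With these tilted ingredients identified, I would replay the conditioning argument of Theorem \ref{teo:chfintpro} verbatim under $\mathcal{Q}^{\theta}$: condition on $\sigma_{N_t}(\tau)$, split the expectation according to the parity of $N_t$ using $p^{\theta}_{2k}(t)$ and $p^{\theta}_{2k+1}(t)$, and apply (\ref{eq:funclevy}) on each segment $[\tau_{j-1},\tau_j]$. Because the switching times and the Lévy segments remain independent under the tilted measure, the product over $j=1,\dots,N_t$ factorizes into $k$ copies associated with regime $1$ and $k$ with regime $2$, yielding the factor $(\lambda_{12}-\theta)^k(\lambda_{21}-\theta)^k (J^{\theta}_1(u)J^{\theta}_2(u))^k$ that constitutes $C_2(u,\theta,k)$, together with the odd-case correction that gives $C_3(u,\theta,k)$. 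The terminal fragment on $[\tau_{N_t},t)$ produces $J^{\theta}_3$ and $J^{\theta}_4$ after integrating against the densities of $t-\tau_{2k}$ and $t-\tau_{2k+1}$, expanding Kummer's $M(a,b,z)$ as a power series, and performing the change of variable $z=t-x$ as in the proof of Theorem \ref{teo:chfintpro}.

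The main obstacle is bookkeeping rather than a single hard estimate: one must verify that the Esscher density genuinely decouples across the two independent sources of randomness, then track the cancellations between the normalizer $l_{X_t}(\theta)$ and the integrated cumulants of each segment. These cancellations are what generate both the rate shifts $\lambda_{jk}\mapsto \lambda_{jk}-\theta$ in $J^{\theta}_1, J^{\theta}_2, G^{\theta}_j$ and the additive correction $-l_{R^j}(\mu^j_1\theta+\tfrac{1}{2}\theta^2)\,x$ inside $I^{\theta}_j(u,x)$. Once the tilted analogues of $p_k$, $J_1,\dots,J_4$, and $I_j$ are matched, equation (\ref{eq:chfuntemptheta}) follows by the same algebraic rearrangement used at the end of the proof of Theorem \ref{teo:chfintpro}.
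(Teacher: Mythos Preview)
Your proposal is correct and follows essentially the same route as the paper: reduce to $\varphi^{\theta}_{W_t}$ via Lemma~\ref{le:ito}, identify the Esscher-tilted Lévy cumulant $l^{\theta}_{V^j}(u)=l_{V^j}(u+\theta)-l_{V^j}(\theta)$ together with the shifted exponential rates $\lambda_{jk}-\theta$ for the waiting times, and then rerun the conditioning-on-$\sigma_{N_t}(\tau)$ argument of Theorem~\ref{teo:chfintpro} with all quantities replaced by their $\theta$-versions. The paper states the tilted distributions of $v_j$, $\tau_{2k}$, $\tau_{2k+1}$ and $N_t$ directly rather than deriving them from a decoupling of the Radon--Nikodym density, but the computational content and the final assembly into $C_4(u,\theta)C_5(u,\theta)$ are identical to what you outline.
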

 \begin{proof}
 We follow the lines of  Theorem \ref{teo:chfintpro} applied to $\xi^j_t=V^j_t$ and $f(s)=u \sigma e^{-\alpha(t-s)}$.\\
First, notice that from the change of probability defined by the Esscher transform:
   \begin{eqnarray*}
    \varphi^{\theta}_{V^j_t}(u) &=&  E(e^{i u V^j_t} e^{\theta V^j_t- l_{V^j_t}(\theta)})=\frac{\varphi_{V^j_t}(u- i \theta)}{\varphi_{V^j_t}(- i \theta)}\\
   \end{eqnarray*}
   and $ l^{\theta}_{V^j}(u) = l_{V^j}(u+\theta)-l_{V^j}(\theta)$.\\
  The m.g.f. of the number of regime changes on $[0,t)$ under the Esscher transform becomes:
\begin{eqnarray*}
M_{N_t}(\theta) &=& \sum_{k=0}^{+\infty}  e^{k \theta } p_k(t)=\sum_{k=0}^{+\infty}  e^{2 k \theta  }p_{2k}(t)+\sum_{k=0}^{+\infty}  e^{(2k+1) \theta }p_{2k+1}(t)\\
&=& \sum_{k=0}^{+\infty}  e^{2 k \theta   } \left(p_{2k}(t) +  e^{\theta}p_{2k+1}(t) \right) \\
&=& \sum_{k=0}^{+\infty}  e^{2 k \theta   } \left( F_{\tau_{2k}}(t)+  (e^{\theta}-1) F_{\tau_{2k+1}}(t)- e^{\theta}F_{\tau_{2k+2}}(t) \right)
\end{eqnarray*}
Furthermore, under the measure $\mathcal{Q}^{\theta}$ the times between two consecutive regime changes are  independent random variables denoted by $v_j$ with exponential distribution of parameters $\lambda_{12}-\theta$ and $\lambda_{21}-\theta$ if the process is in regime one or two respectively.\\
The p.d.f's  of the random variables $\tau_{2k}$ and $\tau_{2k+1}$ under $\mathcal{Q}^{\theta}$, denoted respectively by $f_{\tau_{2k}}(x, \theta)$ and $f_{\tau_{2k+1}}(x, \theta)$ are given  by:
\begin{eqnarray*}
f_{\tau_{2k}}(x, \theta) &=& D(k,-k,2k, \theta)  M(k,2k,(\lambda_{21}-\lambda_{12})x)x^{2k-1}e^{-(\lambda_{21}-{\theta}) x} \\
f_{\tau_{2k+1}}(x, \theta) &=& D(k+1,-k,2k+1,\theta)  M(k+1,2k+1,(\lambda_{21}-\lambda_{12})x)  x^{2k}e^{-(\lambda_{21}-{\theta}) x}
\end{eqnarray*}
for $ x \geq 0,  \; k \in \mathbb{N}$. See Lemma \ref{lemma:tausdistrib} and the definition of an Esscher transform \\
 Hence:
   \begin{eqnarray*}
&& E_{\theta} [\exp( i \int_0^t f(s)\;d \xi_s)] = E_{\theta}[\exp( i \sum_{j=1}^{N_t} \int_{\tau_{j-1}}^{\tau_{j}} f(s)\;  d\xi^{\nu(j)}_s)]E_{\theta}[\exp( i \int_{\tau_{N_t}}^{t}  f(s) d\xi^{\nu(N_t+1)} _s)] \\
&=& \left( \sum_{k=0}^{+\infty} (E_{\theta}[\exp(  I^{\theta}_{1}(u,v))])^k (E_{\theta}[\exp(  I^{\theta}_{2}(u,v^*))])^k p^{\theta}_{2k}(t) \right.\\
 &+& \left. \sum_{k=0}^{+\infty} (E_{\theta}[\exp(  I^{\theta}_{1}(u,v))])^{k+1} (E_{\theta}[\exp(  I^{\theta}_{2}(u,v^*))])^k p^{\theta}_{2k+1}(t) \right)\\
 && \left( \sum_{k=0}^{+\infty} E_{\theta}[\exp( I^{\theta}_{1}(u,t-\tau_{2k}))]p^{\theta}_{2k}(t)+ \sum_{k=0}^{+\infty} E_{\theta}[\exp(I^{\theta}_{2}(u,t-\tau_{2k+1}))]p^{\theta}_{2k+1}(t)\right)
\end{eqnarray*}
 with
 \begin{eqnarray*}
  E_{\theta}[\exp(  I^{\theta}_{1}(u,v))] &=&  (\lambda_{12}-\theta) J^{\theta}_1(u) \\
  E_{\theta}[\exp(  I^{\theta}_{2}(u,v^*))  &=& (\lambda_{21}-\theta)J^{\theta}_2(u)
 \end{eqnarray*}
 and
 \begin{eqnarray*}
 &&  E_{\theta}[\exp( I^{\theta}_{1}(u,t-\tau_{2k}))] =  \int_0^{t} e^{I^{\theta}_1(u,x)}f_{t-\tau_{2k}}(x,{\theta})\;dx = \int_0^{t} e^{I^{\theta}_1(u,x)}f_{\tau_{2k}}(t-x,{\theta})\;dx \\
&=& D(k,-k,2k, \theta) \int_0^{t} e^{I^{\theta}_1(u,x)}  (t-x)^{2k-1}e^{-(\lambda_{21}-{\theta}) (t-x)} M(k,2k,(\lambda_{21}-\lambda_{12})(t-x)) \;dx \\
&=& D(k,-k,2k, \theta) \sum_{l=0}^{+\infty} C(k,2k,l)G^{\theta}_1(u,t,2k+l)\lambda^l_{21}= J^{\theta}_3(u,k)
 \end{eqnarray*}
 Similarly:
 \begin{eqnarray*}
 E_{\theta}[\exp(I^{\theta}_{2}(u,t-\tau_{2k+1}))] &=& D(k+1,-k,2k+1, \theta) \sum_{l=0}^{+\infty} C(k+1,2k+1,l)G^{\theta}_2(u,t,2k+l+1)\lambda^l_{21}\\
 &=& J^{\theta}_4(u,k)
 \end{eqnarray*}
  Moreover:
\begin{eqnarray*}
  && \int_0^x l^{\theta}_{V^{j}}(-iu \sigma e^{-\alpha (t-s)})\;  ds=\int_0^x l_{V^{j}}(-iu \sigma e^{-\alpha (t-s)}+\theta)\;  ds - l_{V^j}(\theta)x \\
  &=&   \int_0^x l_{R^{j}}(g_j(u,s, \theta))\;  ds - l_{R^j}(\mu^j_1 \theta + \frac{1}{2} \theta^2)x= I^{\theta}_j(u,x)
\end{eqnarray*}
which leads to equation (\ref{eq:chfuntemptheta}).
 \end{proof}
Motivated by the search of a risk-neutral environment in the pricing of weather derivatives  the next result specifies the value of $\theta$ under the Esscher transform to obtain an EMM. For practical proposes we restrain the EMM condition to the case $s=0$ and any $t \geq 0$. Hence, the martingale condition reads $E[\tilde{T}_t]= \tilde{T}_0$. The general case follows in a similar way.\\
We  introduce the following quantities:
\begin{eqnarray*}
 L_j(\theta, \alpha)&=& \alpha^{-1}(\mu^j_1+\theta) \frac{d l_{R^{j}}}{du}(\mu^j_1 \theta+ \frac{1}{2}\theta^2)\\
  A_{12}(\theta, \alpha) &=& \frac{  L_1(\theta, \alpha)}{(\lambda_{12}-\alpha -\theta)}\\
   A_{21}(\theta, \alpha) &=&  \frac{L_2(\theta, \alpha, \sigma)}{(\lambda_{21}-\alpha -\theta)}
\end{eqnarray*}
 and the functions $I^{\theta}_j(u,x), J^{\theta}_l(u), G^{\theta}_j(u,t,m), J_3(u,t,k, \theta)$ and $J_4(u,t,k, \theta)$ are defined as in Theorem \ref{prop:chfuntheta}.
   \begin{theorem}\label{prop:chfuntheta2}
 Let $(T_t)_{t \geq 0}$ be the temperature process given by equations (\ref{eq:tempdyna}), (\ref{eq:seass2}) and (\ref{eq:tchlevy2}) starting at regime one. Then,  the Esscher measure $\mathcal{Q}^{\theta}$ is an EMM  if  for any $t>0$ the parameter $\theta$ verifies under the conditions $\lambda_{12}>max(\alpha+\theta,0)$ and  $\lambda_{12} < \lambda_{21}$ the equation:
 \begin{equation}\label{eq:gershui}
R(t,\theta)= \sigma^{-1} e^{\alpha t}(e^{rt} T_0-C_1(t, \alpha))
   \end{equation}
   where:
  \begin{eqnarray*}
 R(t,\theta)&=& -i  \left[ D_1(t,\theta) C_5(0, \theta)+ D_2(t,\theta) \right]\\
  \end{eqnarray*}
  The expressions $D_1(t,\theta)$ and $D_2(t,\theta)$  are given  by:
  \begin{eqnarray*}
  D_1(t,\theta) &=&  -i \alpha \left[\left(A_{12}(\theta, \alpha)+A_{21}(\theta, \alpha)\right)E_{\theta}[N_t]+
 A_{12}(\theta, \alpha) p^{\theta}_O(t) \right] \\ \label{eq:D1F}
 && \\
 D_2(t,\theta) &:=&  \frac{dC_5(u \sigma^{1}e^{\alpha t}, \theta,k)}{du}|_{u=0} = \sum_{k=0}^{+\infty}   \frac{dJ^{\theta}_3(u \sigma^{-1}e^{\alpha t},k)}{du}|_{u=0}  p^{\theta}_{2k}(t)\\ \nonumber
  &+&  \sum_{k=0}^{+\infty}  \frac{dJ^{\theta}_4(u \sigma^{-1}e^{\alpha t},k)}{du}|_{u=0}p^{\theta}_{2k+1}(t) \\ \label{eq:D2F}
 &&
  \end{eqnarray*}
 where $p^{\theta}_{O}(t)$ is the probability to observe an odd number of regime changes in the  interval $[0,t)$.
 \end{theorem}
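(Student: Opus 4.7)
The plan is to rewrite the EMM requirement as a direct equation for the first moment of the noise-driven part of $T_t$, and then to read that moment off the characteristic function obtained in Theorem \ref{prop:chfuntheta} by differentiating at $u=0$. Using the decomposition $T_t=C_1(t,\alpha)+\sigma e^{-\alpha t}W_t$ from Lemma \ref{le:ito}, the condition $E_\theta[\tilde T_t]=\tilde T_0$ amounts to $E_\theta[T_t]=e^{rt}T_0$, which rearranges to
\begin{equation*}
E_\theta[W_t] = \sigma^{-1}e^{\alpha t}\bigl(e^{rt}T_0-C_1(t,\alpha)\bigr).
\end{equation*}
This is exactly the right-hand side of \eqref{eq:gershui}, so it suffices to show $R(t,\theta)=E_\theta[W_t]$.

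The next step is the normalization $C_4(0,\theta)=C_5(0,\theta)=1$. At $u=0$ the function $g_j(u,s,\theta)$ collapses to the constant $\mu^j_1\theta+\tfrac12\theta^2$, so $I^\theta_j(0,x)=0$, $J^\theta_j(0)=(\lambda_{jj'}-\theta)^{-1}$, $C_2(0,\theta,k)=1$, and $C_3(0,\theta,k)=p^\theta_{2k}(t)+p^\theta_{2k+1}(t)$; the sum over $k$ then telescopes to $\sum_n p^\theta_n(t)=1$, giving $C_4(0,\theta)=1$, and $C_5(0,\theta)=1$ follows from $\varphi^\theta_{W_t}(0)=1$. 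Combining this with the chain rule on $\varphi^\theta_{W_t}(v)=C_4(v\sigma^{-1}e^{\alpha t},\theta)C_5(v\sigma^{-1}e^{\alpha t},\theta)$ and the product rule,
\begin{equation*}
E_\theta[W_t]=-i(\varphi^\theta_{W_t})'(0)=-i\bigl[D_1(t,\theta)C_5(0,\theta)+C_4(0,\theta)D_2(t,\theta)\bigr],
\end{equation*}
which is precisely $R(t,\theta)$ as written in the statement (keeping $C_5(0,\theta)$ and $C_4(0,\theta)=1$ implicit in $D_2$). The formula for $D_2$ is just this chain-rule derivative pushed through $J^\theta_3$ and $J^\theta_4$.

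The remaining work is to identify $D_1$ in the closed form involving $E_\theta[N_t]$, $p^\theta_O(t)$, $A_{12}(\theta,\alpha)$ and $A_{21}(\theta,\alpha)$. Differentiating $C_4(u,\theta)=\sum_k C_2(u,\theta,k)C_3(u,\theta,k)$ at $u=0$ reduces, via the values of $C_2(0,\theta,k)$ and $C_3(0,\theta,k)$, to evaluating $(J^\theta_j)'(0)$. A direct calculation using $\partial_u g_j\big|_{u=0}=-i\sigma(\mu^j_1+\theta)e^{-\alpha(t-s)}$ and integrating in $s$ yields $\partial_u I^\theta_j(0,x)=-i\sigma L_j(\theta,\alpha)e^{-\alpha t}(e^{\alpha x}-1)$; inserting this into $J^\theta_j$ and using the hypothesis $\lambda_{12}>\alpha+\theta$ to combine the two resulting exponential integrals gives $(\lambda_{jj'}-\theta)(J^\theta_j)'(0)=-i\sigma e^{-\alpha t}\alpha A_{jj'}(\theta,\alpha)$. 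Finally, one repackages the double sum using $\sum_k p^\theta_{2k+1}(t)=p^\theta_O(t)$ and $\sum_k k\bigl(p^\theta_{2k}+p^\theta_{2k+1}\bigr)$ expressed through $E_\theta[N_t]$ and $p^\theta_O(t)$ (since $E_\theta[N_t]=2\sum_k k(p^\theta_{2k}+p^\theta_{2k+1})+p^\theta_O(t)$). The main obstacle is precisely this last step: the differentiation under two nested improper integrals (justified by $\lambda_{12}>\max(\alpha+\theta,0)$ to ensure absolute integrability) together with the combinatorial rearrangement of the series into the compact $E_\theta[N_t]$, $p^\theta_O(t)$ form; everything else is book-keeping with the chain rule, Lemma \ref{lemma:tausdistrib} and the Kummer expansion already exploited in Theorem \ref{prop:chfuntheta}.
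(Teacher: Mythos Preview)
Your approach is essentially the same as the paper's: reduce the EMM condition $E_\theta[\tilde T_t]=T_0$ to $E_\theta[W_t]=\sigma^{-1}e^{\alpha t}(e^{rt}T_0-C_1(t,\alpha))$ via Lemma~\ref{le:ito}, express $\varphi^\theta_{W_t}(u)=C_4(u\sigma^{-1}e^{\alpha t},\theta)\,C_5(u\sigma^{-1}e^{\alpha t},\theta)$ from Theorem~\ref{prop:chfuntheta}, differentiate at $u=0$ with the product rule, and identify $D_1$ by computing $(J^\theta_j)'(0)$ through $\partial_u I^\theta_j$ and integrating the resulting exponentials (the integrability step being exactly where $\lambda_{12}>\alpha+\theta$ enters). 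The paper carries out these same derivative computations explicitly in its appendix and arrives at $D_1$ by the very rearrangement you describe.

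One point to flag: your assertion that $C_5(0,\theta)=1$ ``follows from $\varphi^\theta_{W_t}(0)=1$'' is not how the paper treats it. The paper does \emph{not} simplify $C_5(0,\theta)$ to $1$; it leaves it as the explicit sum $\sum_k J^\theta_3(0,k)p^\theta_{2k}(t)+\sum_k J^\theta_4(0,k)p^\theta_{2k+1}(t)$ (with $J^\theta_3(0,k)$, $J^\theta_4(0,k)$ written out via incomplete Gamma functions), and this is the $C_5(0,\theta)$ that multiplies $D_1$ in the statement. The reason is that the factorization $E[\exp(i\int_0^t f\,d\xi)]=(\text{first factor})\cdot(\text{second factor})$ in Theorem~\ref{teo:chfintpro} is presented as an approximation (cf.\ the abstract), so one cannot conclude $C_4(0,\theta)C_5(0,\theta)=1$ from $\varphi^\theta_{W_t}(0)=1$. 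Since you retain $C_5(0,\theta)$ in your final $R(t,\theta)$ anyway, this does not break your argument, but the normalization step should be dropped.
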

\begin{proof}
From Lemma \ref{le:ito} the discounted temperature process $(\tilde{T}_t)_{t \geq 0}$ verifies:
     \begin{equation*}
       \tilde{T}_t= \tilde{C}_1(t, \alpha)+\sigma e^{-\alpha t} \tilde{W}_t
     \end{equation*}
   Then, the EMM condition $E_{\theta}(\tilde{T}_t)=T_0$ translates into the equivalent condition:
   \begin{eqnarray*}
    C_1(t, \alpha)+ \sigma e^{-\alpha t}E_{\theta}[W_t]=e^{rt}T_0
   \end{eqnarray*}
   or equivalently the parameter $\theta$ solves:
   \begin{eqnarray}\label{eq:frtmomw}
   E_{\theta}[W_t] &=&  \sigma^{-1} e^{\alpha t}(e^{rt}T_0-C_1(t, \alpha))
   \end{eqnarray}
  We invoke Theorem \ref{prop:chfuntheta} to compute the first moment of $W_t$ via its characteristic function. To this end notice that
  \begin{equation*}
    W_t=\sigma^{-1} e^{\alpha t}(T_t-C_1(t, \alpha))
  \end{equation*}
  Consequently:
    \begin{eqnarray*}
  \varphi^{\theta}_{W_t}(u) &=& \exp(-iu \sigma^{-1}e^{\alpha t}C_1(t, \alpha)) \varphi^{\theta}_{T_t}(u \sigma^{-1}e^{\alpha t})\\
    &=& \exp[-iu \sigma^{-1}e^{\alpha t}C_1(t, \alpha)] \exp[iu \sigma^{-1}e^{\alpha t}C_1(t, \alpha)] C_4(u \sigma^{-1}e^{\alpha t}, \theta)  C_5(u \sigma^{-1}e^{\alpha t}, \theta)\\
    &=& C_4(u \sigma^{-1}e^{\alpha t}, \theta)  C_5(u \sigma^{-1}e^{\alpha t}, \theta)
  \end{eqnarray*}
 Notice that $C_2(0, \theta,k)=1$ and $C_3(0, \theta,k)=p^{\theta}_{2k}(t)+p^{\theta}_{2k+1}(t)$.\\
 Computation of the derivatives   of the intermediate functions $I^{\theta}_j(u,x), J^{\theta}_j(u), G^{\theta}_j(u,t,m)$, $J^{\theta}_3(u,t,k),J^{\theta}_4(u,t,k), C_j(u, \theta)$ is straightforward. It is left to the appendix. \\
  Notice also that:
 \begin{eqnarray*}
 G^{\theta}_j(0,t,m) &=& \int_0^{t}   z^{m-1}e^{-(\lambda_{21}-\theta)z} \;dz= (\lambda_{21}-\theta)^{-m} \gamma(m,(\lambda_{21}-\theta)t)\\
  J^{\theta}_3(0,k)&=& D(k, -k, 2k,\theta) \sum_{l=0}^{+\infty}C(k,2k,l)G^{\theta}_j(0,t,2k+l) \lambda^l_{21} \\
 J^{\theta}_4(0,k) &=& D(k+1, -k, 2k+1,\theta)\sum_{l=0}^{+\infty}C(k+1,2k+1,l)G^{\theta}_j(0,t,2k+l+1) \lambda^l_{21} \\
C_2(0, \theta, k) &=& 1 \\
C_3(0, \theta, k) &=& p^{\theta}_{2k}(t)+  p^{\theta}_{2k+1}(t)
\end{eqnarray*}
Leading to:
 \begin{eqnarray} \nonumber
 C_4(0, \theta) &=&  \sum_{k=0}^{+\infty}  C_2(0, \theta,k) C_3(0, \theta, k)= \sum_{k=0}^{+\infty}  C_3(0, \theta, k)\\ \nonumber
 &=& \sum_{k=0}^{+\infty}  (p^{\theta}_{2k}(t)+p^{\theta}_{2k+1}(t))=1 \\ \nonumber
 C_5(0, \theta) &=& \sum_{k=0}^{+\infty}   J^{\theta}_3(0,k)  p^{\theta}_{2k}(t) +  \sum_{k=0}^{+\infty}   J^{\theta}_4(0,k)p^{\theta}_{2k+1}(t)\\ \label{eq:c5zero}
 &&
 \end{eqnarray}
 Hence,
  \begin{eqnarray*}
  \frac{d \varphi_{W^{\theta}_t}(u)}{du}|_{u=0} &=& \frac{d C_4(u \sigma^{-1} e^{\alpha t}, \theta)}{du}|_{u=0} C_5(0, \theta)+ \frac{d C_5(u \sigma^{-1} e^{\alpha t}, \theta)}{du}|_{u=0} C_4(0, \theta)\\
    &=& D_1(t,\theta) C_5(0, \theta)+ D_2(t,\theta)
  \end{eqnarray*}
  This last result combined with equation (\ref{eq:frtmomw}) and taking account that $E_{\theta}[W_t]=-i \frac{d \varphi_{W^{\theta}_t}(u)}{du}|_{u=0}$,
   leads to equation (\ref{eq:gershui}).
 \end{proof}
 \begin{figure}[htb!]
\centering
\includegraphics[width=\textwidth]{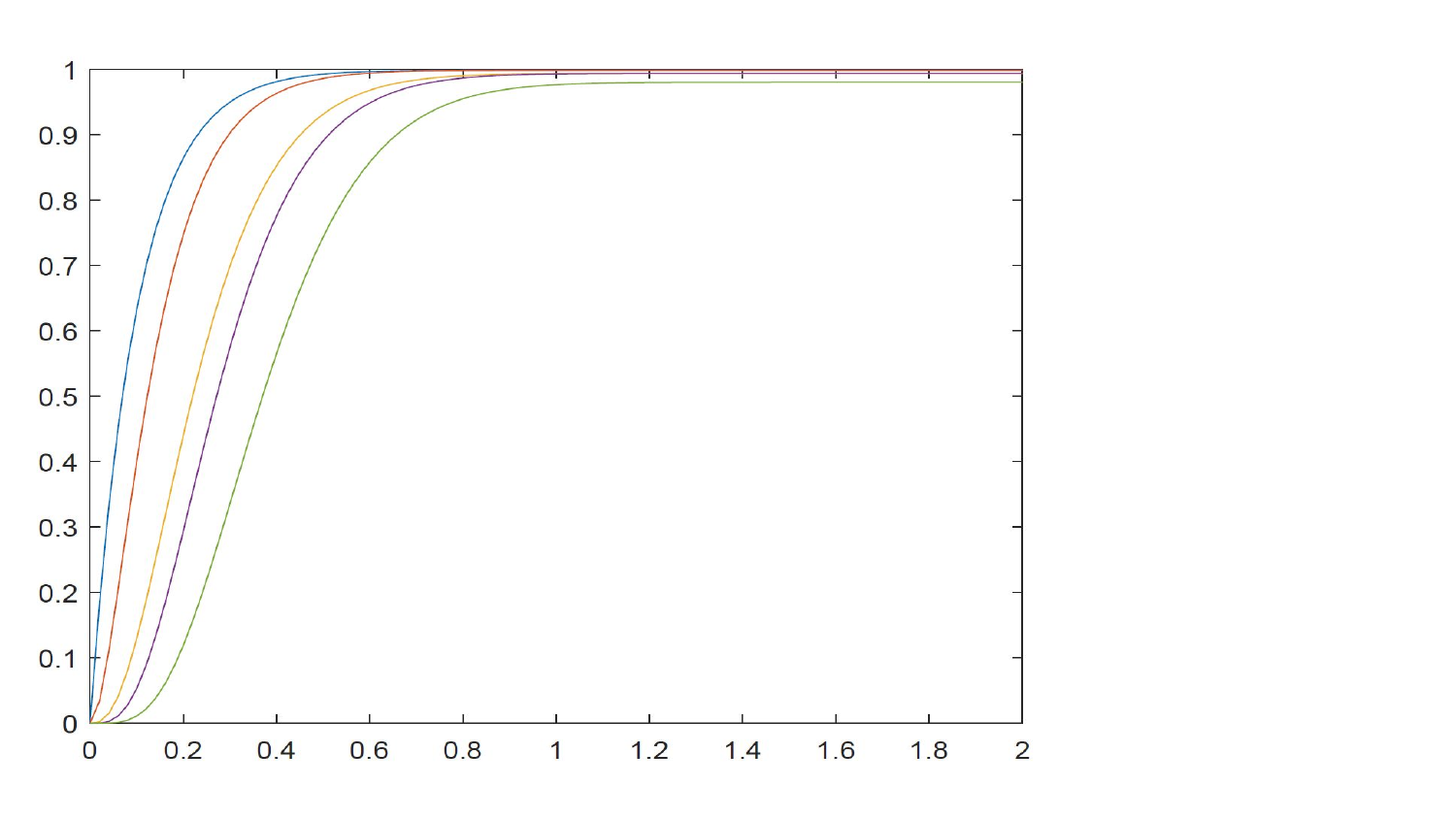}
\caption{C.d.f's of the times of regime changes. The blue, red, yellow, magenta and green  curves represent respectively the cases $k=1,2,\ldots,5$ }
\label{fig:cdfregchanges}
\end{figure}

\begin{figure}[htb!]
\centering
\includegraphics[width=\textwidth]{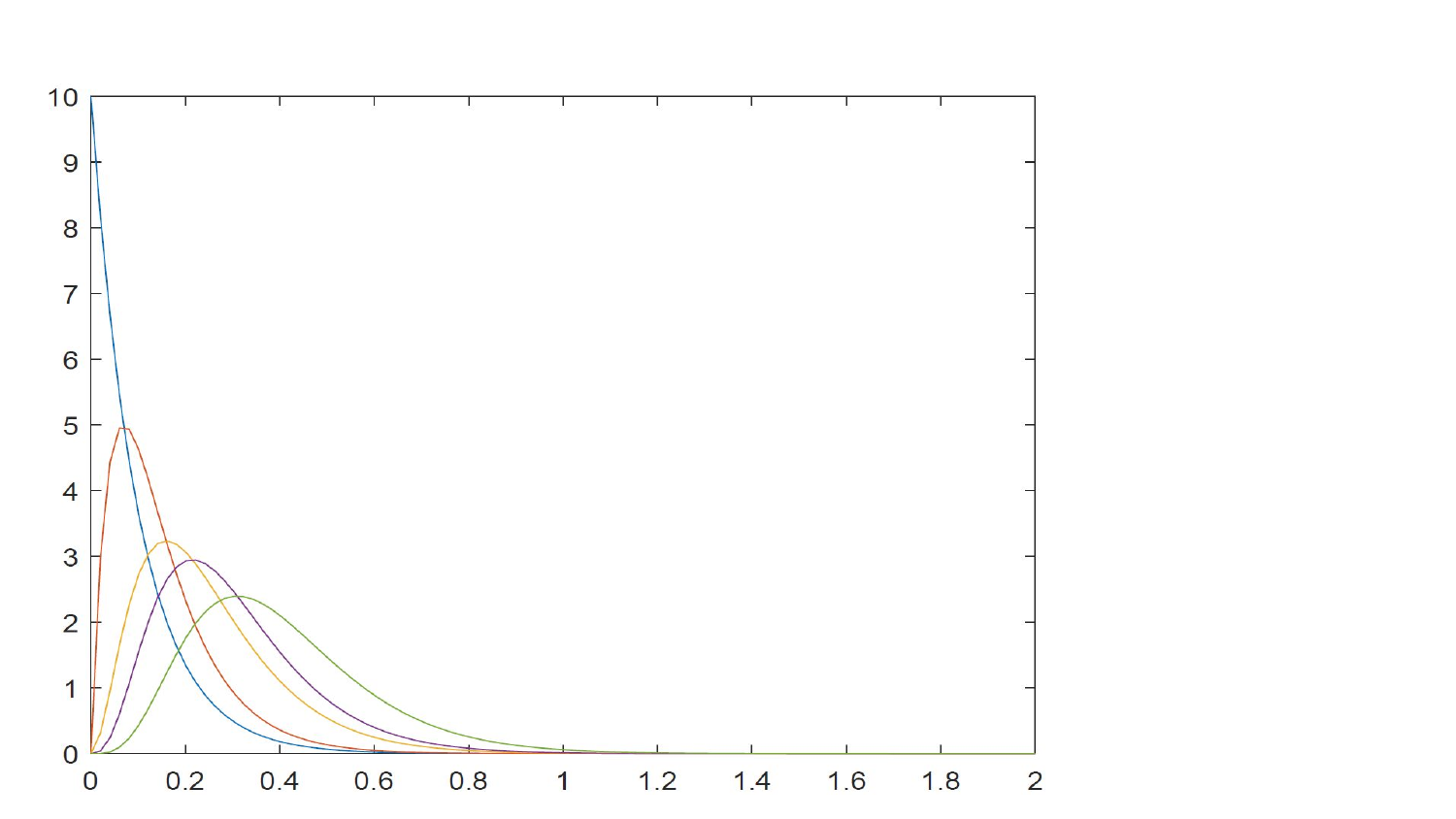}
\caption{P.d.f's of the times of regime changes. The blue, red, yellow, magenta and green  curves represent respectively the cases $k=1,2,\ldots,5$ }
\label{fig:pdfregchg}
\end{figure}

\section{Examples and numerical results}
First, the p.d.f's of the times of regime changes, denoted by $\tau_k$, are shown in Figure \ref{fig:pdfregchg} for $k=1,2,\ldots,5$. The blue curve represents the case $k=1$, which is the p.d.f. of an exponential distribution with parameter $\lambda_{12}$, and the green curve represents the case $k=5$, with cases $k=2,3,4$ in between. The parameter values, chosen for illustrative proposes, are  $\lambda_{12}=10$ and $\lambda_{21}=20$. The rationale in the selection is to consider a stable regime describe by $\lambda_{12}$ and an unstable one given by $\lambda_{21}$. Thus, the system expends on average twice of the time in regime one.\\
In Figure \ref{fig:cdfregchanges}  the c.d.f's of the times of regime changes according to the expressions found in Lemma \ref{lemma:tausdistrib} are represented. The truncation  value in the series is chosen as $n=10$, i.e. including ten terms in the summation, results in a reasonable approximation for the  rate parameters $\lambda_{12}$ and $\lambda_{21}$.\\
In Figure \ref{fig:probregchg} are shown the probabilities to observe $k$ regime changes over an interval $[0,t)$ as described by expression (\ref{eq:poisson1}). Calculations are taken over time horizons $t=1/12,1/4,1$ corresponding to a month, a quarter and a year respectively. Notice that the values of the probability changes depending whether the number of regime changes is even or odd. It is  consistent with the choice of the rate parameters leading to switching between a stable and an unstable period.\\

\begin{figure}[htb!]
\begin{center}
\subfigure[]{
\resizebox*{7cm}{!}{\includegraphics{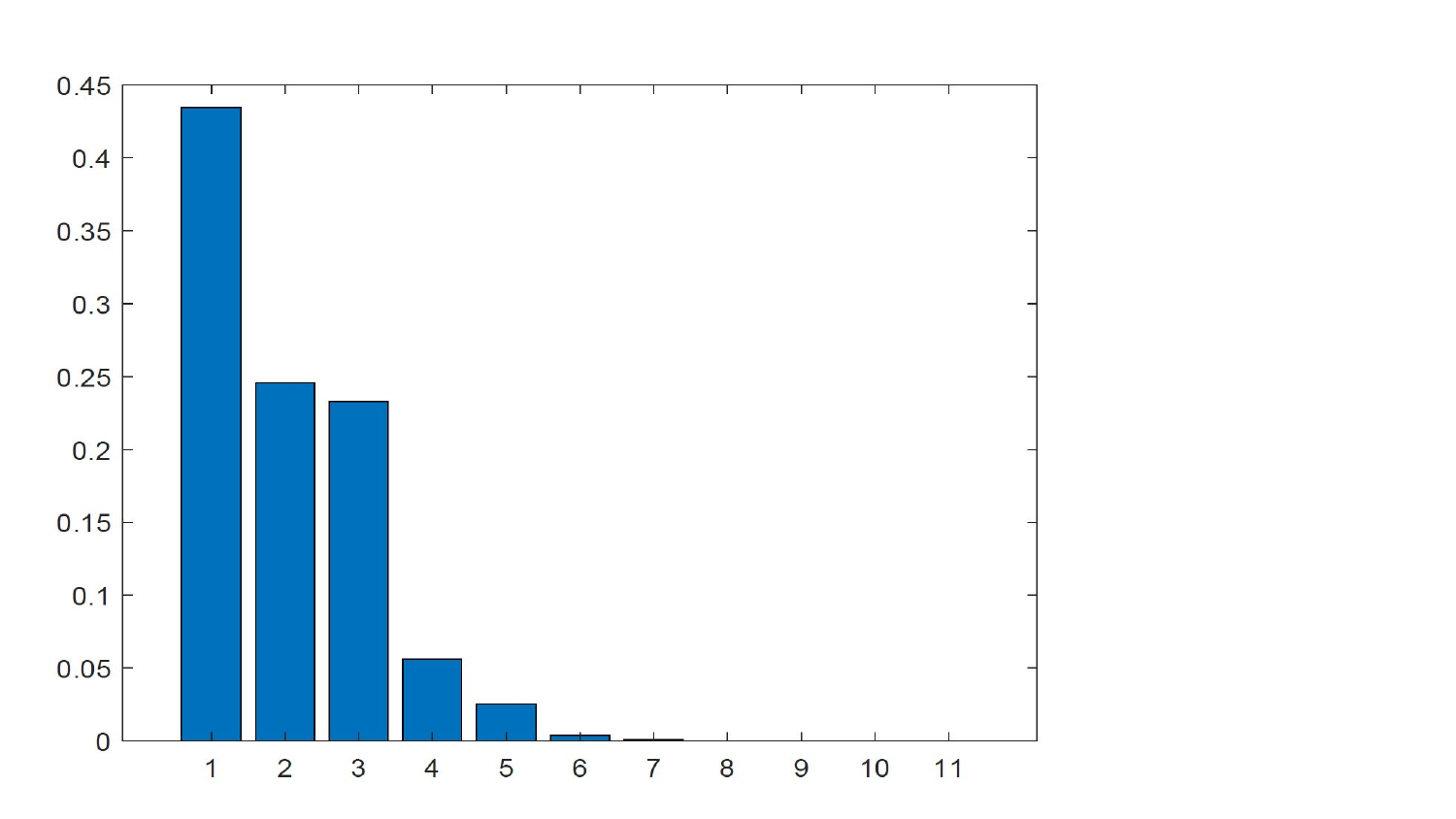}}}
\subfigure[]{
\resizebox*{7cm}{!}{\includegraphics{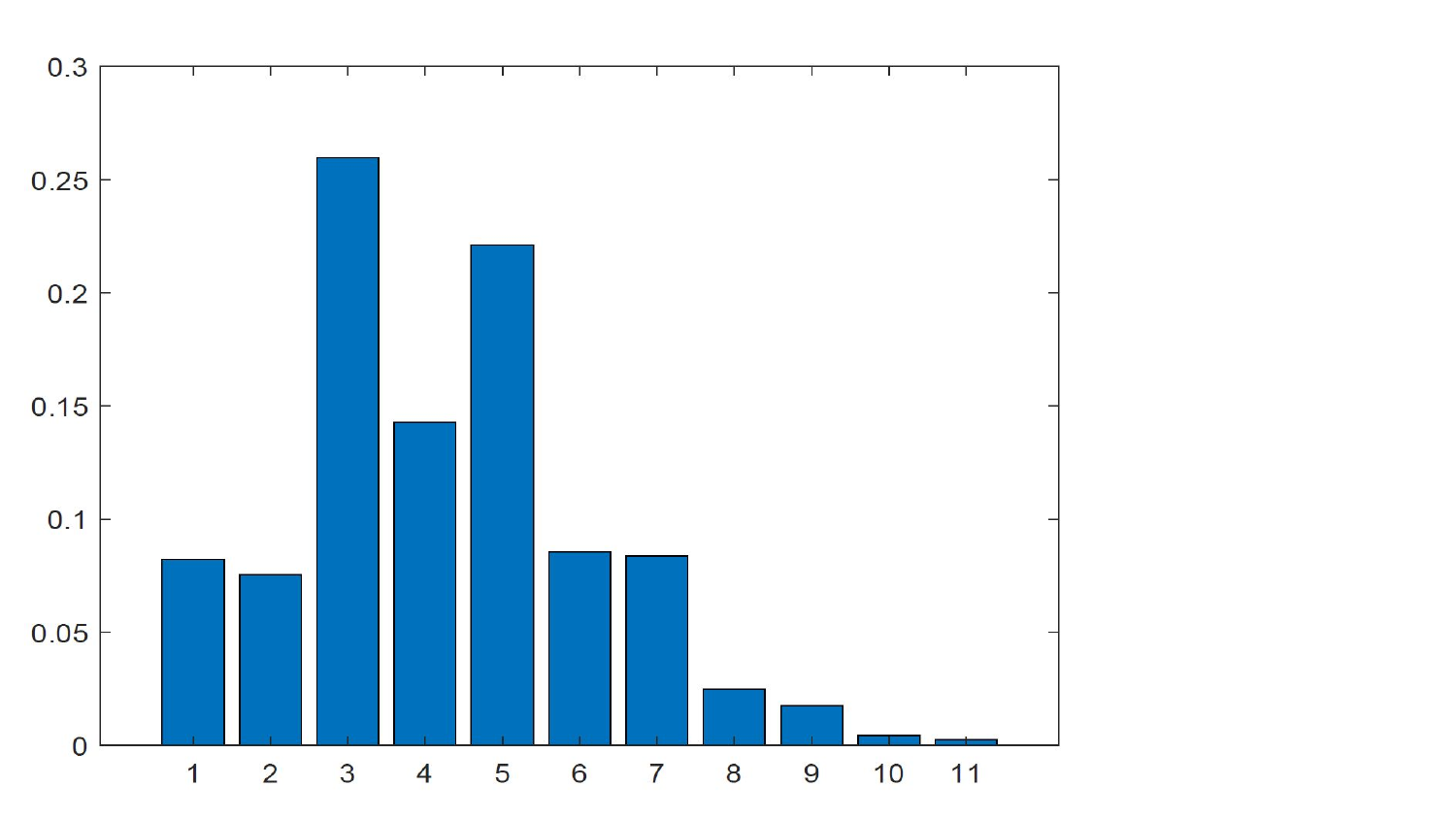}}}\hspace{5pt}
\subfigure[]{ \resizebox*{7cm}{!}{\includegraphics{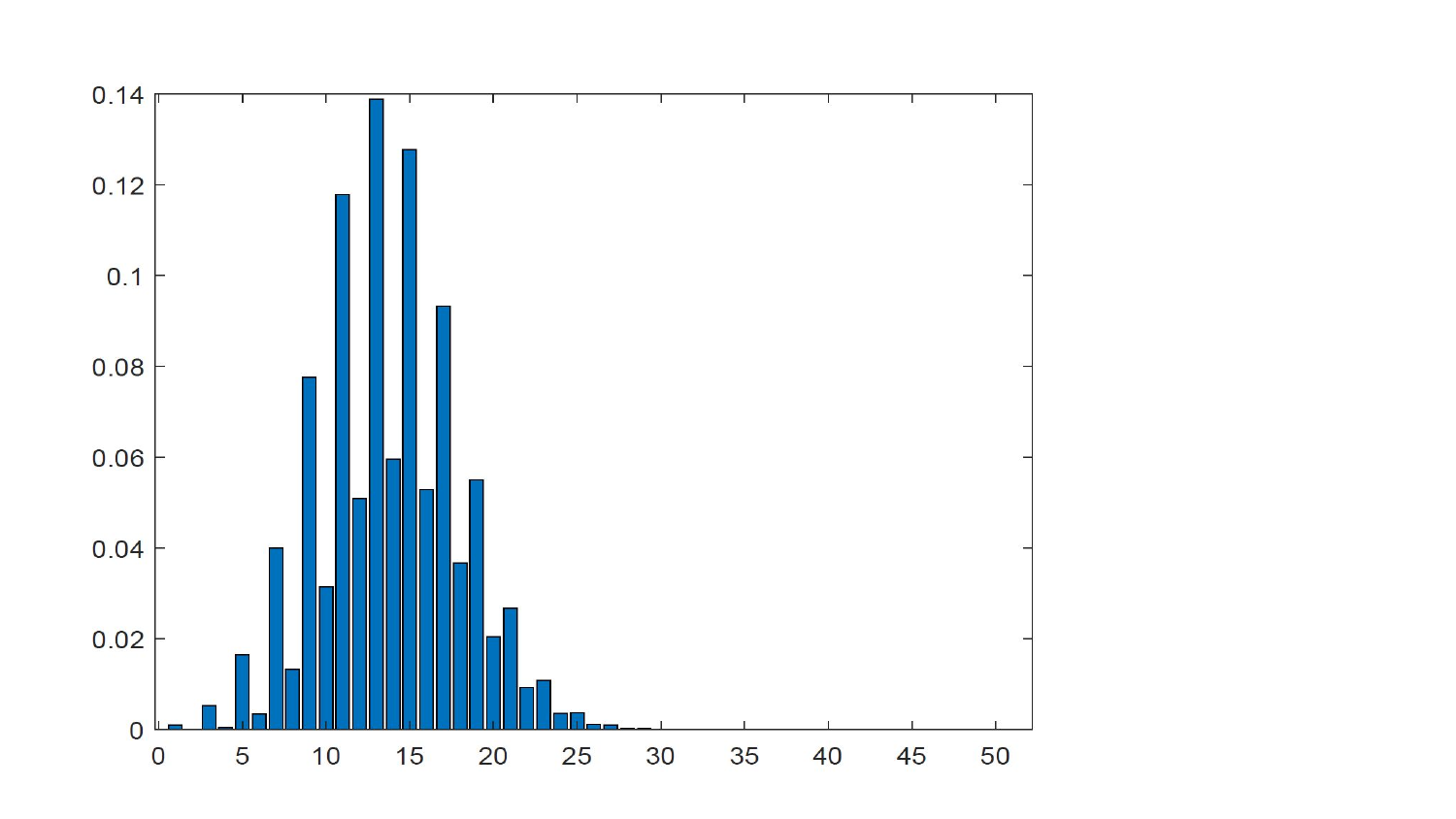}}}
 \caption{Probabilities to observe $k$ regime changes over an interval $[0,t)$ with  $t=1/12,1/4,1$. }\label{fig:probregchg}
\end{center}
\end{figure}

Next, we analyze in details the calculation of the characteristic function and the Esscher parameter in the case of the model (\ref{eq:tempdyna}) with Gamma and Inverse Gaussian subordinators. \\
As there is not closed-form formula of the characteristic function, numerical approximations are required. They involve  truncations in the probability distribution of the changes of regimes on $[0,t]$, as well as in other intermediate related series. Numerical integration, following trapezoidal rule, is also necessary in repeated occasions. Generally speaking, the order of calculation  to obtain the characteristic function under the Esscher transform is the following:
\begin{eqnarray*}
D,C,\Gamma,B_j, M_{\theta}(k), P_{\theta}(k), l^{\theta}_{R^j},I^{\theta}_j, J^{\theta}_l,G^{\theta}_l, J^{\theta}_l(u,k),C_2, C_3, \varphi^{\theta}_{T_t}
\end{eqnarray*}

\begin{example}\textit{Gamma  subordinator }
Consider the subordinators $(R^j_t)_{t \geq 0}$  are Gamma processes with parameters $a_j >0,b_j >0, j=1,2$ with respective
characteristic function and Laplace exponent:
 \begin{eqnarray*}
 \varphi_{R^j}(u) &=& \left( 1- \frac{i u}{b_j} \right)^{-a_j t},\;a_j>0, b_j>0 \\
 l_{R^j}(u) &=&  -a_j \log \left( 1- \frac{ u}{b_j}\right),\; u<b_j
 \end{eqnarray*}
 and
 \begin{eqnarray*}
 \varphi^{\theta}_{R^j}(u) &=& \left( 1- \frac{i g_j(u,s,\theta)}{b_j} \right)^{-a_j t},\;a_j>0, b_j>0 \\
 l^{\theta}_{R^j}(u) &=&  -a_j \log \left( 1- \frac{ g_j(u,s,\theta) }{b_j}\right),\; u<b_j
 \end{eqnarray*}
 where:
 \begin{equation*}
  g_j(u,s,\theta)=u \mu^j_1 \sigma e^{-\alpha t}e^{\alpha s} +  \mu^j_1 \theta + \frac{1}{2}(u \sigma e^{-\alpha t}e^{\alpha s}+\theta)^2,\;j=1,2.
\end{equation*}
Therefore:
 \begin{eqnarray*}
 I^{\theta}_j(u,x)&=&  \int_0^x l_{R^{j}}g_j(u,s,\theta)\;  ds-l_{R^j}(\mu^j_1 \theta+\frac{1}{2} \theta^2)x \\
&=&  -a_j \left[ \int_0^x \log \left( 1- \frac{ g_j(u,s,\theta) }{b_j}\right) \;  ds-log \left(1-\frac{(\mu^j_1 \theta+\frac{1}{2} \theta^2)}{b_j} \right)x \right]\\
J^{\theta}_l(u)&=& \int_0^{+\infty} e^{I^{\theta}_{l}(u,x)}e^{-(\lambda_{l}-\theta)x}\; dx \\
&=& \left( 1-\frac{\mu^j_1 \theta+\frac{1}{2} \theta^2}{b_j} \right)^{-a_j x} \int_0^{+\infty} \exp(-a_j \int_0^x \log \left( 1- \frac{ g_j(u,s,\theta) }{b_j}\right) \;  ds-(\lambda_{l}-\theta)x )\; dx \\
G^{\theta}_j(u,m) &=& \int_0^{t} e^{I^{\theta}_j(u,t-z)}  z^{m-1}e^{-(\lambda_{2}-\theta)z} \;dz,\; j=1,2 \\
&=& \left(1-\frac{ \mu^j_1 \theta+\frac{1}{2} \theta^2}{b_j} \right)^{-a_j t}\\
&& \int_0^{t}\left(1-\frac{ \mu^j_1 \theta+\frac{1}{2} \theta^2}{b_j} \right)^{a_j z}\\
&& \exp( -a_j  \int_0^{t-z} \log \left( 1- \frac{ g_j(u,s,\theta) }{b_j}\right) \;  ds)z^{m-1}e^{-(\lambda_{2}-\theta)z} \;dz
 \end{eqnarray*}
  We compute the  Gerber-Shiu  parameter $\theta$ from the martingale condition given by equation (\ref{eq:gershui}). First,
  \begin{eqnarray*}
  \frac{d l_{R^{j}}(\mu^j_1 \theta+ \frac{1}{2}\theta^2)}{du}&=& \frac{a_j}{b_j-\mu^j_1 \theta-\frac{1}{2} \theta^2}\\
   L_j(\theta, \alpha)&=& \frac{(\mu^j+\theta)}{\alpha} \frac{d l_{R^{j}}(\mu^j_1 \theta+ \frac{1}{2}\theta^2)}{du}\\
&=& \frac{a_j (\mu^j+\theta)}{b_j-\mu^j_1 \theta-\frac{1}{2} \theta^2}
\end{eqnarray*}
   The previous calculations allow to find $D_1, D_2, C_4$ and $C_5$ and their derivatives leading to $R(t,\theta)$ in equation (\ref{eq:gershui}).
\end{example}
  \begin{example} \textit{Switching Inverse Gaussian  subordinator }\\
  \begin{eqnarray*}
 \varphi_{R^j}(u) &=& \exp(-a_j \sqrt{-2iu+b^2_j}-b_j) \\
 l_{R^j}(u) &=& -a_j \sqrt{-2u+b^2_j}-b_j
 \end{eqnarray*}
 and
 \begin{eqnarray*}
 \varphi^{\theta}_{R^j}(u) &=& \exp(-a_j \sqrt{-2i  g_j(u,s,\theta)+b^2_j}-b_j) \\
 l^{\theta}_{R^j}(u) &=& -a_j \sqrt{-2  g_j(u,s,\theta)+b^2_j}-b_j
 \end{eqnarray*}
\begin{eqnarray*}
 I^{\theta}_j(u,x)&=&  \int_0^x l_{R^{j}}(g_j(u,s,\theta))\;  ds-l_{R^j}(\mu^j_1 \theta+\frac{1}{2} \theta^2)x \\
&=&  -a_j \left[ \int_0^x  \sqrt{-2  g_j(u,s,\theta)+b^2_j}\;  ds -(b_j+\sqrt{-2  (\mu^j_1 \theta+\frac{1}{2} \theta^2)+b^2_j})x \right]\\
J^{\theta}_l(u)&=& \int_0^{+\infty} \exp(-a_j (\int_0^x  \sqrt{-2  g_j(u,s,\theta)+b^2_j}\;  ds)\exp(-(b_j+\sqrt{-2  (\mu^j_1 \theta+\frac{1}{2} \theta^2)+b^2_j}\lambda_{l}-\theta)x)\; dx \\
G^{\theta}_j(u,m) &=& \int_0^{t} e^{I^{\theta}_j(u,t-z)}  z^{m-1}e^{-(\lambda_{2}-\theta)z} \;dz \\
&=& \int_0^{t} \exp(-a_j  \int_0^{t-z}  \sqrt{-2  g_j(u,s,\theta)+b^2_j}\;  ds \\
&& -a_j(b_j+\sqrt{-2  (\mu^j_1 \theta+\frac{1}{2} \theta^2)+b^2_j})(t-z))  z^{m-1}e^{-(\lambda_{2}-\theta)z} \;dz \\
&=& \left(b_j+\sqrt{-2  (\mu^j_1 \theta+\frac{1}{2} \theta^2)+b^2_j}\right)t\\
&& \int_0^{t} \exp(-a_j \left[ \int_0^{t-z}  \sqrt{-2  g_j(u,s,\theta)+b^2_j}\;  ds +(b_j+\sqrt{-2  (\mu^j_1 \theta+\frac{1}{2} \theta^2)+b^2_j})z \right])  z^{m-1}e^{-(\lambda_{2}-\theta)z} \;dz
 \end{eqnarray*}
\end{example}

\section{Conclusions}
A switching mean-reverting model for temperatures when the latter oscillates between two stochastic differential equations with time-changed Levy noises has been investigated. The characteristic function and Esscher parameter to obtain an Equivalent Martingale Measure have been found via an approximate closed-form expression. Calculations require some rather complex but feasible numerical approximations involving numerical calculations of double integrals, truncated series and root solving.\\
This theoretical framework paves the way to price weather financial contracts based on temperature indices such as cumulative average temperatures, cooling and heating days.
\section{Acknowledgements}
This research has been supported by the  Natural Sciences and Engineering Research Council of Canada (NSERC) and the Environmental Finance Risk Management program, Institute of Environment, Florida International University, US.
\section{Appendix}
To compute the derivatives of the characteristic function we first find the derivatives of the intermediate functions $I^{\theta}_j(u,x), J^{\theta}_j(u), G^{\theta}_j(u,t,m), J^{\theta}_3(u,t,k, \theta)$, $J^{\theta}_4(u,t,k, \theta)$, $C_2(u, \theta,k) $ and $C_3(u, \theta, k)$.\\
 Since:
\begin{equation*}
  g_j(u \sigma^{-1}e^{\alpha t},s,\theta)=-iu \mu^j_1 e^{\alpha s} +  \mu^j_1 \theta + \frac{1}{2}(-i u e^{\alpha s}+\theta)^2,\;j=1,2.
\end{equation*}
we have:
\begin{eqnarray*}
\frac{d g_j(u \sigma^{-1}e^{\alpha t},s,\theta)}{du} &=&  -i \mu^j_1 e^{\alpha s}-i e^{\alpha s}  (-i u e^{\alpha s}+\theta)\\
&=&  -i \mu^j_1 e^{\alpha s}- u e^{2 \alpha s}-i \theta e^{\alpha s}=-i(\mu^j_1 - i u e^{ \alpha s}+ \theta) e^{\alpha s}
\end{eqnarray*}
Then:
\begin{eqnarray*}
 \frac{d I^{\theta}_j(u  \sigma^{-1}e^{\alpha t},x)}{du}|_{u=0}&=&  \int_0^x \frac{d l_{R^{j}}(g_j(u \sigma^{-1}e^{\alpha t},s,\theta))}{du}|_{u=0} \frac{dg_j(u \sigma^{-1}e^{\alpha t},s,\theta)}{du}|_{u=0} \;  ds \\
 &=&  -i (\mu^j_1+ \theta) \frac{d l_{R^{j}}}{du}(\mu^j_1 \theta+ \frac{1}{2}\theta^2) \int_0^x e^{\alpha s} \;  ds \\
 &=& -i \frac{(\mu^j_1+ \theta)}{\alpha} \frac{d l_{R^{j}}}{du}(\mu^j_1 \theta+ \frac{1}{2}\theta^2)(e^{\alpha x}-1)=-i L_j(\theta, \alpha)(e^{\alpha x}-1)
\end{eqnarray*}
Hence, taking into account:
\begin{eqnarray*}
I^{\theta}_{l}(0,x) &=&  \int_0^x l_{R^{j}}(  \mu^j_1 \theta +\frac{1}{2} \theta^2)\;  ds-l_{R^j}(\mu^j_1 \theta+\frac{1}{2} \theta^2))x=0
\end{eqnarray*}
we have:
\begin{eqnarray*}
&& \frac{d G^{\theta}_j(u \sigma^{-1}e^{\alpha t},t,m)}{du} =  \int_0^{t} \frac{d I^{\theta}_j(u \sigma^{-1}e^{\alpha t},t-z)}{du} e^{I^{\theta}_j(u \sigma^{-1}e^{\alpha t},t-z)}  z^{m-1}e^{-(\lambda_{21}-\theta)z} \;dz \\
&& \frac{d G^{\theta}_j(u \sigma^{-1}e^{\alpha t},t,m)}{du}|_{u=0} = -i L_1(\theta, \alpha) \int_0^{t} e^{I^{\theta}_j(0,t-z)} (e^{\alpha(t-z)}-1)  z^{m-1}e^{-(\lambda_{21}-\theta)z} \;dz \\
&=& -i L_j(\theta, \alpha)  \left[\int_0^{t}  z^{m-1}e^{-(\lambda_{21}-\theta)z+\alpha(t-z)} \;dz-\int_0^{t}   z^{m-1}e^{-(\lambda_{21}-\theta)z} \;dz \right]\\
&=& -i L_j(\theta, \alpha)  \left[e^{\alpha t} \int_0^{t}   z^{m-1}e^{-(\lambda_{21}-\theta+\alpha)z} \;dz - \int_0^{t} z^{m-1}e^{-(\lambda_{21}-\theta)z} \;dz \right]\\
&=& -i L_j(\theta, \alpha)  \left[e^{\alpha t}(\alpha+\lambda_{21}-\theta)^{-m} \gamma(m,\alpha+\lambda_{21}-\theta)-(\lambda_{21}-\theta)^{-m} \gamma(m,\lambda_{21}-\theta)\right]
\end{eqnarray*}
Moreover:
\begin{eqnarray}\nonumber
\frac{d J^{\theta}_1(u \sigma^{-1}e^{\alpha t})}{du} &=& \int_0^{+\infty} \frac{dI^{\theta}_{1}(u \sigma^{-1}e^{\alpha t},x)}{du}e^{I^{\theta}_{l}(u \sigma^{-1}e^{\alpha t},x)}e^{-(\lambda_{12}-\theta)x}\; dx \\ \nonumber
\frac{d J^{\theta}_1(u \sigma^{-1}e^{\alpha t})}{du}|_{u=0} &=& \int_0^{+\infty} \frac{dI^{\theta}_{1}(u \sigma^{-1}e^{\alpha t},x)}{du}|_{u=0} e^{I^{\theta}_{l}(0,x)}e^{-(\lambda_{12}-\theta)x}\; dx\\ \nonumber
&=& -i L_1(\theta, \alpha) \int_0^{+\infty} (e^{\alpha x}-1) e^{-(\lambda_{12}-\theta)x}\; dx\\ \nonumber
&=&- i L_1(\theta, \alpha) \int_0^{+\infty} (e^{\alpha x}-1) e^{-(\lambda_{12}-\theta)x}\; dx\\ \nonumber
&=&- i L_1(\theta, \alpha)  \left[\int_0^{+\infty} e^{\alpha x} e^{-(\lambda_{l}-\theta)x}\; dx-\int_0^{+\infty}  e^{-(\lambda_{12}-\theta)x}\; dx \right]\\ \nonumber
&=&- i L_1(\theta, \alpha)  \left[\frac{1}{\lambda_{12}-\alpha-\theta}-\frac{1}{\lambda_{12}-\theta} \right]\\ \nonumber
&=&    - \frac{i \alpha L_1(\theta, \alpha)}{(\lambda_{12}-\theta)(\lambda_{12}-\alpha -\theta)}\\ \label{eq:jk}
 \end{eqnarray}
 under the condition $ \lambda_{12}> \theta+\alpha$.\\
 Similarly
 \begin{eqnarray}\nonumber
\frac{d J^{\theta}_2(u \sigma^{-1}e^{\alpha t})}{du} &=&  - \frac{i \alpha L_2(\theta, \alpha)}{(\lambda_{21}-\theta)(\lambda_{21}-\alpha -\theta)}
 \end{eqnarray}
 On the other hand:
\begin{eqnarray*}
\frac{d [J^{\theta}_1(u \sigma^{-1}e^{\alpha t})J^{\theta}_2(u \sigma^{-1}e^{\alpha t})]^k}{du} &=&  \frac{d [J^{\theta}_1(u \sigma^{-1}e^{\alpha t})]^k}{du}[J^{\theta}_2(u \sigma^{-1}e^{\alpha t})]^k \\
&+& \frac{d [J^{\theta}_2(u \sigma^{-1}e^{\alpha t})]^k}{du} [J^{\theta}_1(u \sigma^{-1}e^{\alpha t})]^k \\
&=& k [J^{\theta}_1(u \sigma^{-1}e^{\alpha t})]^{k-1}[J^{\theta}_2(u \sigma^{-1}e^{\alpha t})]^k \frac{d J^{\theta}_1(u \sigma^{-1}e^{\alpha t})}{du}\\
&+& k [J^{\theta}_2(u \sigma^{-1}e^{\alpha t})]^{k-1} [J^{\theta}_1(u \sigma^{-1}e^{\alpha t}\sigma^{-1}e^{\alpha t})]^k \frac{d J^{\theta}_2(u \sigma^{-1}e^{\alpha t})}{du}
\end{eqnarray*}
Hence:
\begin{eqnarray}\nonumber
&&\frac{d (J^{\theta}_1(u \sigma^{-1}e^{\alpha t}))^k(J^{\theta}_2(u \sigma^{-1}e^{\alpha t}))^k}{du}|_{u=0} = k [J^{\theta}_1(0)]^{k-1}[J^{\theta}_2(0)]^k \frac{d J^{\theta}_1(u \sigma^{-1}e^{\alpha t})}{du}|_{u=0}\\ \nonumber
&+& k [J^{\theta}_2(0)]^{k-1}[J^{\theta}_1(0)]^k \frac{d J^{\theta}_2(u \sigma^{-1}e^{\alpha t})}{du}|_{u=0}\\ \nonumber
&=& k [J^{\theta}_1(0)]^{k-1}[J^{\theta}_2(0)]^{k-1} \left[\frac{d J^{\theta}_1(u \sigma^{-1}e^{\alpha t})}{du}|_{u=0}J^{\theta}_2(0) + \frac{d J^{\theta}_2(u \sigma^{-1}e^{\alpha t})}{du}|_{u=0}J^{\theta}_1(0) \right] \\ \label{eq:jaysprod}
&&
\end{eqnarray}
But:
\begin{eqnarray*}
 J^{\theta}_1(0)  &=& \int_0^{+\infty} e^{-(\lambda_{12}-\theta)x}\; dx =\frac{1}{\lambda_{12}-\theta}\\
 J^{\theta}_2(0)  &=& \int_0^{+\infty} e^{-(\lambda_{21}-\theta)x}\; dx =\frac{1}{\lambda_{21}-\theta}
\end{eqnarray*}
Then, we have after substituting into (\ref{eq:jaysprod}) that:
\begin{eqnarray}\nonumber
\frac{d (J^{\theta}_1(u))^k(J^{\theta}_2(u))^k}{du}|_{u=0} &=& \frac{k}{(\lambda_{12}-\theta)^{k-1}(\lambda_{21}-\theta)^{k-1}}\\ \nonumber
&& \left[\frac{-i \alpha L_1(\theta, \alpha)}{(\lambda_{12}-\theta)(\lambda_{12}-\alpha -\theta)(\lambda_{21}-\theta)} - \frac{i \alpha L_2(\theta,\alpha)}{(\lambda_{21}-\theta)(\lambda_{21}-\alpha -\theta)(\lambda_{12}-\theta)} \right]\\ \nonumber
 &=& -i \frac{k \alpha}{(\lambda_{12}-\theta)^{k}(\lambda_{21}-\theta)^{k}} \left[\frac{  L_1(\theta, \alpha)}{(\lambda_{12}-\alpha -\theta)} + \frac{L_2(\theta, \alpha)}{(\lambda_{21}-\alpha -\theta)} \right]\\ \label{eq:jlk}
 &&
\end{eqnarray}
Moreover:
\begin{eqnarray}\nonumber
 \frac{d J_3(u\sigma^{-1}e^{\alpha t},k, \theta)}{du}|_{u=0} &=& D(k, -k, 2k, \theta) \sum_{l=0}^{+\infty}C(k,2k,l) \frac{dG^{\theta}_1(u\sigma^{-1}e^{\alpha t},t,2k+l)}{du}|_{u=0}\lambda^l_{21} \\ \nonumber
 \frac{d J_4(u \sigma^{-1}e^{\alpha t},k, \theta)}{du}|_{u=0} &=& D(k+1, -k, 2k+1,\theta)\\ \nonumber
 && \sum_{l=0}^{+\infty}C(k+1,2k+1,l)\frac{d G^{\theta}_2(u\sigma^{-1}e^{\alpha t},t,2k+l+1)}{du}|_{u=0} \lambda^l_{21} \\ \label{eq:jthetak}
 &&
\end{eqnarray}
Then, taking into account equations (\ref{eq:jk}) and (\ref{eq:jlk})  we have:
\begin{eqnarray*}
\frac{dC_2(u \sigma^{-1}e^{\alpha t}, \theta,k)}{du} &=&  (\lambda_{12}-\theta)^k (\lambda_{21}-\theta)^k \frac{d[J^{\theta}_1(u \sigma^{-1}e^{\alpha t})]^k [J^{\theta}_2(u \sigma^{-1}e^{\alpha t})]^k}{du} \\
 \frac{dC_2(u \sigma^{-1}e^{\alpha t}, \theta,k)}{du}|_{u=0} &=& -i k \alpha \left[\frac{  L_1(\theta, \alpha)}{(\lambda_{12}-\alpha -\theta)} + \frac{L_2(\theta, \alpha)}{(\lambda_{21}-\alpha -\theta)} \right]
\end{eqnarray*}
and
\begin{eqnarray*}
\frac{dC_3(u \sigma^{-1}e^{\alpha t}, \theta,k)}{du} &=& (\lambda_{12}-\theta)\frac{dJ^{\theta}_1(u)}{du} p^{\theta}_{2k+1}(t)\\
\frac{dC_3(u \sigma^{-1}e^{\alpha t}, \theta,k)}{du}|_{u=0} &=&  (\lambda_{12}-\theta)\frac{dJ^{\theta}_1(u \sigma^{-1}e^{\alpha t})}{du}|_{u=0} p^{\theta}_{2k+1}(t)\\
&=&  - i \frac{\alpha L_1(\theta, \alpha)}{(\lambda_{12}-\alpha -\theta)}   p^{\theta}_{2k+1}(t)\\
\end{eqnarray*}
Then,
 \begin{eqnarray} \nonumber
 \frac{d C_4(u \sigma^{-1}e^{\alpha t}, \theta)}{du} &=& \sum_{k=0}^{+\infty}  \left[ \frac{ dC_2(u \sigma^{-1}e^{\alpha t}, \theta,k)}{du} C_3(u \sigma^{-1}e^{\alpha t}, \theta, k) \right. \\  \nonumber
 &+& \left. \frac{dC_3(u \sigma^{-1}e^{\alpha t}, \theta,k)}{du} C_2(u \sigma^{-1}e^{\alpha t}, \theta, k) \right]\\ \nonumber
 D_1(t, \theta) &=& \frac{dC_4(u \sigma^{-1}e^{\alpha t}, \theta)}{du}|_{u=0}= \sum_{k=0}^{+\infty}  \left[ \frac{dC_2(u \sigma^{-1}e^{\alpha t}, \theta,k)}{du}|_{u=0} C_3(0, \theta, k) \right. \\ \nonumber
 &+& \left. \frac{dC_3(u \sigma^{-1}e^{\alpha t}, \theta,k)}{du}|_{u=0}  \right]\\ \nonumber
 &=& \sum_{k=0}^{+\infty}  \left[  -i k \alpha \left(\frac{  L_1(\theta, \alpha)}{(\lambda_{12}-\alpha -\theta)} + \frac{L_2(\theta, \alpha)}{(\lambda_{21}-\alpha -\theta)} \right) (p^{\theta}_{2k}(t)+p^{\theta}_{2k+1}(t)) \right. \\ \nonumber
 &-& \left.  i \alpha \frac{ L_1(\theta, \alpha)}{(\lambda_{12}-\alpha -\theta)}   p^{\theta}_{2k+1}(t) \right]\\ \nonumber
 &=&  -i \alpha \left[ \sum_{k=0}^{+\infty}   k  \left(\frac{  L_1(\theta, \alpha)}{(\lambda_{12}-\alpha -\theta)} + \frac{L_2(\theta, \alpha)}{(\lambda_{21}-\alpha -\theta)} \right) (p^{\theta}_{2k}(t)+p^{\theta}_{2k+1}(t)) \right. \\ \nonumber
 &+& \left. ( \frac{ L_1(\theta, \alpha)}{(\lambda_{12}-\alpha -\theta)}   p^{\theta}_{2k+1}(t)) \right] \\ \nonumber
 &=&  -i \alpha \left[\left(\frac{  L_1(\theta, \alpha)}{(\lambda_{12}-\alpha -\theta)} + \frac{L_2(\theta, \alpha)}{(\lambda_{21}-\alpha -\theta)} \right) \sum_{k=0}^{+\infty}   k  p^{\theta}_{k}(t) \right. \\ \nonumber
 &+& \left.  \frac{ L_1(\theta, \alpha)}{(\lambda_{12}-\alpha -\theta)}  \sum_{k=0}^{+\infty} p^{\theta}_{2k+1}(t) \right] \\ \label{eq:D10}
 &&
  \end{eqnarray}
  from which equation (\ref{eq:D1F}) follows.\\
 Moreover,
 \begin{eqnarray}\nonumber
 \frac{dC_5(u \sigma^{-1}e^{\alpha t}, \theta)}{du} &=& \sum_{k=0}^{+\infty}   \frac{dJ^{\theta}_3(u \sigma^{-1}e^{\alpha t},k)}{du}  p^{\theta}_{2k}(t) \\ \nonumber
 &+&  \sum_{k=0}^{+\infty}   \frac{dJ^{\theta}_4(u \sigma^{-1}e^{\alpha t},k)}{du}p^{\theta}_{2k+1}(t)
  \end{eqnarray}
\bibliography{ReferenceswitchingWD}
\bibliographystyle{plain}
 \end{document}